\numberwithin{equation}{section}
\DeclareFontFamily{OT1}{rsfs}{}
\DeclareFontShape{OT1}{rsfs}{n}{it}{<-> rsfs10}{}
\DeclareMathAlphabet{\mathscr}{OT1}{rsfs}{n}{it}
\theoremstyle{definition}
\newcommand{\IGNORE}[1]{}
\newtheorem{theorem}{Theorem}[section]
\newtheorem{proposition}[theorem]{Proposition}
\newtheorem{lemma}[theorem]{Lemma}
\theoremstyle{definition}
\newtheorem{definition}[theorem]{Definition}
\newtheorem{remark}[theorem]{Remark}
\def\mR {\mathbb {R}^n}
\newcommand{\dstyle}{\displaystyle}
\newcommand{\hide}[1]{}
\begin{document}

\title[Finite total $Q$-curvature]{Fractional Poincar\'e inequality with finite total $Q$-curvature}

%\date{2013, April 23}
\author[Yannick Sire]{Yannick Sire}\address{Yannick Sire, Department of Mathematics, Johns Hopkins University, 404 Krieger Hall, 3400 N. Charles Street, Baltimore 21218, USA} \address{ email: sire@math.jhu.edu}

\author[Yi Wang]{Yi Wang}
\address{Yi Wang, Department of Mathematics, Johns Hopkins University, 404 Krieger Hall, 3400 N. Charles Street, Baltimore 21218, USA}
\address{ email: ywang@math.jhu.edu}
\setcounter{page}{1}
\thanks{The research of the second author is partially supported
by NSF grant DMS-1547878}

\subjclass{Primary 53A30; Secondary 53C21}

\begin{abstract}In this paper, we prove several Poincar\'e inequalities of fractional type on conformally flat manifolds with finite total $Q$-curvature. This shows a new aspect of the $Q$-curvature on noncompact complete manifolds.
%It provides further evidence that $Q$-curvature controls geometry as the Gaussian curvature does in two dimension on locally conformally flat

\end{abstract}
\maketitle

\section{Introduction}

The $Q$-curvature arises naturally as a conformal invariant associated to
the Paneitz operator. When $n=4$, the Paneitz operator is defined as:
$$P_g=\Delta^2+\delta(\frac{2}{3}Rg-2 Ric)d,$$
where $\delta$ is the divergence, $d$ is the differential, $R$ is the scalar curvature of $g$, and $Ric$
is the Ricci curvature tensor. The Paneitz $Q$-curvature is defined as
$$Q_g=\frac{1}{12}\left\{-\Delta R +\frac{1}{4}R^2 -3|E|^2 ,\right\}
$$
where $E$ is the traceless part of $Ric$, and $|\cdot|$ is taken with respect to the metric $g$.
Under the conformal change $g_{u}=e^{2u}g_0$, the Paneitz operator transforms by $P_{g_u}=e^{-4u}P_{g_0}$,
and
$Q_{g_u}$ satisfies the fourth order equation
\begin{equation}
\label{1.88}P_{g_{0}}u+2Q_{g_0}=2Q_{g_{u}}e^{4u}.\end{equation}
This is analogous to the transformation law satisfied by the Laplacian operator $-\Delta_g$ and the Gaussian curvature $K_g$ on surfaces,
$$-\Delta_{g_0}u+K_{g_0}=K_{g_u}e^{2u}.$$

The invariance of $Q$-curvature in dimension $4$ is due to the Chern-Gauss-Bonnet formula for a closed manifold $M$:
\begin{equation}\label{GBCEq}\chi(M)=\dstyle\frac{1}{4\pi^2} \int_{M}\left(\frac{|W|^2}{8}+Q_g\right) dv_g,\end{equation}
where $W$ denotes the Weyl tensor. 

A related problem is the classical isoperimetric inequality on a complete simply connected surface $M^2$, called Fiala-Huber's \cite{Fiala}, \cite{Huber} isoperimetric inequality
\begin{equation}\label{FialaHuber}
vol(\Omega)\leq \frac{1}{2(2\pi-\int_{M^2}K_g^+ dv_g)} Area(\partial \Omega)^2,
\end{equation}
where $K_g^+$ is the positive part of the Gaussian curvature $K_g$. Also $\int_{M^2}K_g^+ dv_g< 2\pi$ is the sharp bound for the isoperimetric inequality to hold. 
%since the half cylinder has $\int_{M^2}K^+ dv_g= 2\pi$.

In \cite{YW15}, we generalize the Fiala-Huber's isoperimetric inequality to all even dimensions, replacing the role of the Gaussian curvature in dimension two by that of the $Q$-curvature in higher dimensions:

Let $(M^n,g)=(\mR, g= e^{2u}|dx|^2)$ be a complete noncompact even dimensional manifold.
Let $Q^+$ and $Q^-$ denote the
positive and negative part of $Q_g$ respectively; and $dv_g$ denote the volume form of $M$. Suppose $g= e^{2u}|dx|^2$ is a ``normal" metric, i.e.
\begin{equation}\label{normal}u(x)=
\displaystyle \frac{1}{c_n}\int_{\mR} \log \frac{|y|}{|x-y|} Q_{g}(y) dv_g(y) + C;
\end{equation}
for some constant $C$.
If
\begin{equation}\label{assumption1}
\beta^+:= \int_{M^n}Q^{+}dv_g < c_n
\end{equation}
where $c_n=2^{n-2}(\frac{n-2}{2})!\pi^{\frac{n}{2}}$,
and \begin{equation}\label{assumption2}
\beta^-:=\int_{M^n}Q^{-}dv_g < \infty,
\end{equation}
then $(M^n,g)$ satisfies the isoperimetric inequality with isoperimetric constant depending only on $n, \beta^+$ and
$\beta^-$.
Namely, for any bounded domain $\Omega\subset M^n$ with smooth boundary,
\begin{equation}\label{1.89}
|\Omega|_g^{\frac{n-1}{n}}\leq C(n, \beta^+,\beta^-) |\partial \Omega |_g.
\end{equation}
%In particular,  $c_2=\pi$ and $c_4= 4\pi^2$.

The main purpose of the current paper is to further derive the fractional Poincar\'{e}  inequality. The constant in this inequality is also controlled by the integral of the $Q$-curvature.

\begin{theorem}\label{Theorem1}Let $(M^n,g)=(\mR, g= e^{2u}|dx|^2)$ be a complete noncompact even dimensional manifold.
Let $Q^+$ and $Q^-$ denote the
positive and negative part of $Q_g$ respectively; and $dv_g$ denote the volume form of $M$. Suppose $g= e^{2u}|dx|^2$ is a ``normal" metric, i.e.
\begin{equation}\label{normal}u(x)=
\displaystyle \frac{1}{c_n}\int_{\mR} \log \frac{|y|}{|x-y|} Q_{g}(y) dv_g(y) + C;
\end{equation}
for some constant $C$.
If
\begin{equation}\label{assumption1}
\beta^+:= \int_{M^n}Q^{+}dv_g < c_n
\end{equation}
where $c_n=2^{n-2}(\frac{n-2}{2})!\pi^{\frac{n}{2}}$,
and \begin{equation}\label{assumption2}
\beta^-:=\int_{M^n}Q^{-}dv_g < \infty,
\end{equation}
then $(M^n,g)$ satisfies the fractional Poincar\'{e} inequality with constant depending only on $n, \beta^+$ and
$\beta^-$.
Namely, for $\alpha \in (0,2)$, there exists $C>0$ depending only on $n, \beta^+$ and
$\beta^-$, such that for any function $f$ in $C^2(M^n)$ and any Euclidean ball $B$,
\begin{equation}
%\begin{split}
\int_B |f(x)-f_{B, \omega}|^2\omega(x)dx
\leq C\int_{2B}\int_{2B}
\frac{|f(x)-f(y)|^2}{d_g(x,y)^{n+\alpha}}
\omega(x)\omega(y)dxdy.
%\end{split}
\end{equation}
\end{theorem}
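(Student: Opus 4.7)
The plan is to combine a weighted Euclidean fractional Poincar\'e inequality for the conformal weight $\omega(x) = e^{nu(x)}$ with a scale-invariant comparison between the Riemannian distance $d_g$ and the Euclidean distance on the doubled ball $2B$.

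First I would establish that $\omega$ is a Muckenhoupt $A_2$ weight on $\mR$ with $A_2$ constant depending only on $n,\beta^+,\beta^-$. Starting from the normal representation \eqref{normal}, sharp Adams--Moser--Trudinger estimates (available precisely because $\beta^+<c_n$) provide uniform bounds on $\int_B e^{\pm p u}\,dx$ for $p$ in a suitable range around $n$; this yields both the doubling property of $\omega$ and the uniform estimate
$$\Bigl(\frac{1}{|B|}\int_B \omega\Bigr)\Bigl(\frac{1}{|B|}\int_B \omega^{-1}\Bigr)\le C(n,\beta^+,\beta^-)$$
on every Euclidean ball $B\subset\mR$. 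With this in hand, the classical weighted fractional Poincar\'e inequality for $A_2$ weights (Hurri-Syrj\"anen--V\"ah\"akangas, Chua) yields, for every $f\in C^2(\mR)$ and $\alpha\in(0,2)$,
$$\int_B |f-f_{B,\omega}|^2\,\omega\,dx \le C(n,\alpha,\beta^+,\beta^-) \int_{2B}\int_{2B}\frac{|f(x)-f(y)|^2}{|x-y|^{n+\alpha}}\,\omega(x)\omega(y)\,dx\,dy.$$

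Next I would derive a scale-invariant Harnack-type inequality $\sup_{2B}e^u \le C(n,\beta^+,\beta^-)\inf_{2B}e^u$, again extracted from the Newtonian potential representation \eqref{normal} using the subcriticality $\beta^+<c_n$. Integrating $e^u$ along the Euclidean segment from $x$ to $y$ gives the pointwise bound $d_g(x,y) \le C\,e^{u(z)}\,|x-y|$ for any reference point $z\in 2B$, whence
$$\frac{1}{|x-y|^{n+\alpha}} \le C\,e^{(n+\alpha)u(z)}\,\frac{1}{d_g(x,y)^{n+\alpha}}.$$
The factor $e^{(n+\alpha)u(z)}$ is absorbed into $\omega(x)\omega(y) = e^{n(u(x)+u(y))}$ by the Harnack bound, at the cost of a constant depending only on $n,\beta^+,\beta^-,\alpha$. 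Substituting into the previous display produces the desired inequality.

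The hardest step will be the scale-invariant Harnack bound for $e^u$. A priori, $u$ may oscillate heavily over a large ball and the ratio $\sup_{2B}e^u/\inf_{2B}e^u$ could depend on $B$; controlling it uniformly in $B$ is exactly the role of the finite total $Q$-curvature hypotheses \eqref{assumption1}--\eqref{assumption2}, through the quantitative control they give on the logarithmic potential in \eqref{normal}. This same Harnack estimate underlies the $A_2$ bound in the first step, and is essentially the mechanism used to prove the isoperimetric inequality \eqref{1.89} in \cite{YW15}; I expect that argument to transfer with minor modifications to the present setting.
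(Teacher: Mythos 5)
The central difficulty is with your scale-invariant Harnack inequality $\sup_{2B}e^u\le C(n,\beta^+,\beta^-)\inf_{2B}e^u$ over \emph{all} Euclidean balls $B$: this cannot hold under the theorem's hypotheses. If it did hold with a uniform constant, $\omega=e^{nu}$ would essentially be a locally $A_1$ weight, but the hypotheses only deliver that $\omega$ is a strong $A_\infty$ weight, a genuinely larger class. A concrete obstruction is a (smoothed) cone metric: a normal metric with $Q_g\le 0$ concentrated near the origin and total mass $\int Q^-\,dv_g=\gamma c_n$ for small $\gamma>0$ has $e^{u(x)}\sim|x|^{\gamma}$ at infinity, and on the balls $B(0,R)$ the ratio $\sup_{2B}e^u/\inf_{2B}e^u\sim R^{\gamma}\to\infty$ as $R\to\infty$. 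Such metrics satisfy \eqref{assumption1}--\eqref{assumption2} and are precisely the geometries the theorem is designed to cover. As a consequence the pointwise substitution you rely on, $|x-y|^{-(n+\alpha)}\lesssim d_g(x,y)^{-(n+\alpha)}$ uniformly over the ball, is false; and the proposed ``absorption'' of the multiplicative factor $e^{(n+\alpha)u(z)}$ into $\omega(x)\omega(y)$ is not coherent, since $\omega(x)\omega(y)$ already appears identically on both sides of the comparison, so the factor would have to be uniformly bounded, which is again the missing Harnack bound. This also means the Harnack estimate is not, as you suggest, the mechanism behind the isoperimetric inequality in \cite{YW15}: that proof goes through the strong $A_\infty$ property and the David--Semmes Sobolev inequality, precisely to avoid any $A_1$-type pointwise control.

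The quantitative link the hypotheses actually give between Euclidean and geodesic distance is integral, not pointwise: the strong $A_\infty$ relation $d_g(x,y)^n\approx\int_{B_{xy}}\omega\,dz$. The paper exploits this by never introducing the Euclidean kernel at all. It first proves a local $2$-Poincar\'e inequality from the David--Semmes pointwise estimate for strong $A_\infty$ weights (Section~5), rewrites it in terms of the weighted divergence operator $L_{\mu_2}$, and then controls $\|L_{\mu_2}^{\alpha/4}f\|_{L^2(\mu_2)}$ via off-diagonal resolvent bounds (Lemma~\ref{off}) and a Vitali covering by \emph{geodesic} balls, whose $\omega$-volumes are uniformly comparable to $r^n$ by Proposition~\ref{Proposition1}. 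The identification $\omega(B_{xy})\approx d_g(x,y)^n$ then closes the argument at the very last step. To repair your proposal you would need to replace the pointwise Harnack comparison by this integral relation, and at that point the direct route through a Euclidean-kernel weighted fractional Poincar\'e inequality no longer connects to the geodesic kernel without a covering argument along the lines the paper uses.
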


The proof of Theorem \ref{Theorem1} is based on several steps: the first one consists in deriving a $2$-Poincar\'e inequality relying on the properties of the metric under consideration. This step gives in particular an important geometric meaning of the $Q$-curvature. The second step consists in using functional calculus to rewrite properly this Poincar\'e inequality and using spectral theory to estimate powers of a suitable weighted Laplacian. Finally, one derives the desired inequality in Theorem \ref{Theorem1} by a covering argument and some estimates. This approach has been successfully used in \cite{MRS,RS} to derive some types of fractional Poincar\'e inequalities in some Euclidean or geometric contexts. Notice that in these latter works, the fractional Poincar\'e inequality is not symmetric with respect to the measure in the right hand side. In our Theorem, this is the case due to a suitable covering and also the fact that we are considering local estimates. 

\begin{remark}We remark that $\liminf_{|x|\rightarrow \infty}R_g(x)\geq 0$ would imply $g= e^{2u}|dx|^2$ is a ``normal" metric in dimension four. See \cite{CQY1}.We also remark that the constant $c_n$ in the assumption (\ref{assumption1}) is sharp. In fact, $c_n$ is equal to the integral of the $Q$-curvature
on a half cylinder (a cylinder with a round cap attached to one of its two ends); but obviously a half cylinder fails to satisfy the isoperimetric inequality. We also remark that being a normal metric is a natural and necessary assumption. 
\end{remark}

\begin{remark}
It is worth noting that normal metric is a necessary assumption in this theorem, because without it, there exist quadratic functions in the kernel of the bi-Laplacian operator $\Delta^2$ (with respect to the flat metric) for which (\ref{1.3}) fails. 
\end{remark}
The paper will be organized as follows. In section 2, we present preliminaries on the $Q$-curvature in conformal geometry. In section 3, we discuss $A_p$ weights, their properties and relations to various inequalities. We then devote section 4 to the volume growth estimate of geodesic balls, which will be used in later sections, and discuss the $p$-Poincar\'{e} inequality in section 5. Finally, in section 6, we finish the proof of Theorem \ref{Theorem1}.\\

\textbf{Acknowledgments:}
The second author is grateful to Alice Chang and Paul Yang for discussions and interest to this work.

%We proceed in this paper as follows.In section 2 we review some basic properties of quasiconformal maps, and present Bonk, Heinonen and Saksman's result \cite{bonk}. In section 3 we study the quasiconformal map associated to an auxiliary measure and prove Theorem \ref{main}. This is the main part of the paper. In section 4 we prove Theorem \ref{A1}, giving a much shorter and very different proof of the isoperimetric inequality in the special case $Q\geq 0$. The argument relates to $A_p$ weights, and definitions and preliminaries of $A_p$ weights will be included there. Finally, in section 5 we derive the bi-Lipschitz parametrization in Theorem \ref{bilip} by using Theorem \ref{main}.\\

%1 Cohn-Vossen's result on asymptotic balls. 2 Keep track of constants, $n, \alpha,\beta$. 3 Proposition, Lemma, Theorem. 4 Change the wording of positive curvature and quote my previous work. 5 for any domain $\Omega\subseteq \mR$ "with smooth boundary". 6 Add quasi-conformal mapping references and isoperimetric inequality references. 7 Add BHS reference on the negative part of $Q$-curvature.

\section{preliminaries in conformal geometry}
In the past decades, there have been many works focusing on the study of the $Q$-curvature equation and the associated
conformal covariant operators, both from PDE point of view and from the geometry point of view.
We now discuss some background of it in conformal geometry.
Consider a 4-manifold $(M^4,g)$, the Branson's $Q$-curvature of $g$
is defined as
$$Q_g:=\frac{1}{12}\left\{-\Delta R_g +\frac{1}{4}R_g^2 -3|E|^2 ,\right\}
$$
where $R_g$ is the scalar curvature, $E_g$ is the traceless part of $Ric_g$, and $|\cdot|$ is taken with respect to the metric $g$.
It is well known that the $Q$-curvature is an integral conformal invariant associated to
the fourth order Paneitz operator $P_g$
$$P_g:=\Delta^2+\delta(\frac{2}{3}R_g g-2 Ric_g)d.$$
Under the conformal change 
$Q_{g_u}$ satisfies the fourth order differential equation.
%See for example %the work of \cite{?} and Fefferman, Graham's works \cite{?}, \cite{?}, \cite{?}.
%For example, the problem of prescribing $Q$-curvature on compact manifolds,
%and the classification of solutions, etc..
In the particular situation when the background metric $g_0=|dx|^2$, the equation (\ref{1.88}) reduces to
$$(-\Delta)^2 u= 2Q_{g_{u}}e^{4u},$$ where $\Delta$ is the Laplacian operator of the flat metric $g_0$.

Another analogy between the $Q$-curvature and the Gaussian curvature is the invariance of the integral of the $Q$-curvature, due to the Chern-Gauss-Bonnet formula for closed manifold $M^4$:
\begin{equation}\label{GB}\chi(M^4)=\dstyle\frac{1}{4\pi^2} \int_{M^4}\left(\frac{|W_g|^2}{8}+Q_g\right) dv_{M},\end{equation}
where $W_g$ denotes the Weyl tensor.
There has been great progress in understanding the $Q$-curvature. For example see the work of Fefferman-Graham \cite{FeffermanGraham} on the study of the $Q$-curvature and ambient metrics, that of Chang-Qing-Yang \cite{CQY1} on the $Q$-curvature and Cohn-Vossen inequality; and that of Malchiodi \cite{Malchiodi}, Chang-Gursky-Yang \cite{ChangGurskyYang}
on the existence and regularity of constant $Q$-curvature metrics, etc. %\cite{Spyros} on the structures of conformal invariants; and

For higher dimensions, the $Q$-curvature is defined via the analytic continuation in the dimension and the formula is not explicit in general.
However when the background metric is flat, it satisfies, under the conformal change of metric $g_{u}=e^{2u}|dx|^2$, the $n$-th order differential equation
$$(-\Delta)^{\frac{n}{2}} u= 2Q_{g_{u}}e^{nu},$$ where $\Delta$ is the Laplacian operator of $|dx|^2$.

For complete conformally flat manifolds, the Gauss-Bonnet formula \eqref{GB} is no longer valid.
Chang-Qing-Yang \cite{CQY1} proved instead the following result: let $(M^4,g)=(\mathbb{R}^4, e^{2u} |dx|^2)$ be a noncompact complete conformally flat manifold
with finite total $Q$-curvature, i.e. $\int_{M^4}|Q_g|d v_g<\infty$. If the metric is normal, i.e.
\begin{equation}\label{wFlat}
u(x)=\frac{1}{4\pi^2}\int_{\mathbb{R}^4}\log\frac{|y|}{|x-y|}Q_g(y)e^{4u(y)} dy + C,
\end{equation} or if the scalar curvature $R_g$ is nonnegative at infinity, then
\begin{equation}\label{1.3}
\dstyle \frac{1}{4\pi^2}\int_{M^4} Q_g dv_g\leq  \chi(\mathbb{R}^4)=1,
\end{equation}
and \begin{equation}\label{1.4}
\displaystyle \chi(\mathbb{R}^4)-\frac{1}{4\pi^2}\int_{\mathbb{R}^4} Q_g dv_g= \sum_{j=1}^{k}\lim_{r\rightarrow
\infty }\frac{vol_g(\partial B_{j}(r))^{4/3}}{4(2\pi^2)^{1/3}vol_g(B_{j}(r))},
\end{equation}
\noindent where $B_{j}(r)$ denotes the Euclidean ball with radius $r$ at the $j$-th end.

Chang, Qing and Yang's theorem asserts that for $4$-manifolds (in fact, their theorem is valid for all even dimensions) which is conformal to the Euclidean space, the integral of the $Q$-curvature controls the asymptotic isoperimetric ratio at the end of this complete manifold.
This is analogous to the two-dimensional result by
Cohn-Vossen \cite{Cohn-Vossen}, who studied the Gauss-Bonnet integral for a noncompact complete surface $M^2$ with analytic
metric. He showed that if the Gaussian curvature $K_g$ is absolutely
integrable (in which case we say the manifold has finite total curvature), then
\begin{equation}\label{1.1}
\dstyle \frac{1}{2\pi}\int_{M} K_g dv_g\leq  \chi(M),
\end{equation}
where $\chi(M)$ is the Euler characteristic of $M$. Later, Huber \cite{Huber} extended this inequality to
metrics with much weaker regularity. More importantly, he proved that such a surface
$M^2$ is conformally equivalent to a closed surface with finitely many points removed. The difference of the two sides in inequality (\ref{1.1}) encodes the asymptotic behavior of the manifold at its ends.  The precise geometric interpretation has been given by Finn \cite{Finn} as follows. Suppose a noncompact complete surface has absolutely integrable Gaussian curvature. Then one may
represent each end conformally as $\mathbb{R}^2 \setminus K$ for some compact set $K$. Define the asymptotic
isoperimetric constant of the $j$-th end to be
$$\nu_i=\lim_{r\rightarrow \infty}\frac{L_g^2(\partial B(0,r)\setminus K)}{4\pi A_g(B(0,r)\setminus K)},
$$
where $B(0,r)$ is the Euclidean ball centered at origin with radius $r$, $L$ is the
length of the boundary, and $A$ is the area of the domain. Then
\begin{equation}\label{1.2}
\displaystyle \chi(M)-\frac{1}{2\pi}\int_{M} K_g dv_g= \sum_{j=1}^{N}\nu_{j},
\end{equation}
where $N$ is the number of ends on $M$. This result tells us that the condition of finite total
Gaussian curvature has rigid geometric and analytical consequences.

Chang, Qing and Yang's results
(\ref {1.3}), (\ref {1.4}) are higher dimensional counterparts of (\ref {1.1}), (\ref {1.2}). 
%Beyond these results, in \cite{CQY2}, they also generalized these results to locally conformally flat manifolds with certain curvature conditions and proved the conformal compactification of such manifolds.

\section{$A_p$ weights and Strong $A_\infty$ weights}
In this section, we are going to present the definitions and the properties of $A_p$ weights and strong $A_\infty$ weights.

In harmonic analysis, $A_p$ weights ($p\geq 1$) are introduced to characterize when a function $\omega$ could be a weight such that the associated measure $\omega(x)dx$
has the property that the maximal function $\textrm{M}$ of an $L^1$ function is weakly $L^1$, and that the maximal
function of an $L^p$ function is $L^p$ if $p>1$.

For a nonnegative locally integrable function $\omega$, we call it an $A_p$ weight $p>1$, if
\begin{equation}\label{Ap}
\frac{1}{|B|}\int_{B}\omega(x)dx\cdot \left(\frac{1}{|B|}\int_{B}\omega(x)^{-{p'}/{p}}dx\right)^{{p}/{p'}}\leq C<\infty,
\end{equation}
for all balls $B$ in $\mathbb{R}^n$. Here $p'$ is conjugate to $p$: $\frac{1}{p'}+\frac{1}{p}=1$. The constant $C$ is uniform for all $B$ and we call the smallest such constant $C$ the $A_p$ bound of $\omega$.
The definition of $A_1$ weight is given by taking limit of $p\rightarrow 1$ in (\ref{Ap}),
which gives
$$\dstyle \frac{1}{|B|} \int_{B}\omega  \leq C \omega(x), $$
for almost all $x\in B$.
Thus it is equivalent to say the maximal function of the weight is bounded by the weight itself:
$$\textsl{M}\omega(x)\leq C' \omega(x),$$
for a uniform constant $C'$.
Another extreme case is the $A_\infty$ weight. $\omega$ is called an $A_\infty$ weight if it is an $A_p$ weight for some $p>1$.
It is not difficult to see $A_1\subseteq A_p\subseteq A_{p'}\subseteq A_\infty$ when $1\leq p\leq p'\leq \infty$.

One of the most fundamental property of $A_p$ weight is the reverse H\"{o}lder inequality:
if $\omega$ is $A_p$ weight for some $p\geq 1$, then there exists an $r>1$ and a $C>0$, such that
\begin{equation}\label{reverse holder}
\dstyle \left( \dstyle \frac{1}{|B|} \int_{B}\omega^r dx \right)^{1/r}\leq \frac{C}{|B|} \dstyle \int_{B}\omega dx,
\end{equation}
for all balls $B$.
This would imply that any $A_p$ weight $\omega$ satisfies the doubling property:
there is a $C>0$ (it might be different from the constant $C$ in (\ref{reverse holder})), such that $$\dstyle \int_{B(x_0, 2r)} \omega(x) dx\leq C \int_{B(x_0, r)} \omega(x) dx$$
for all balls $B(x_0, r)\subset\mR$.
%for each $\epsilon>0$, there is a $\delta>0$ such that if $Q\subseteq \mathbb{R}^n$ is any cube and $E\subseteq Q$ satisfies $|E|\leq \delta |Q|$, then
%$\omega(E)\leq \epsilon \omega(Q)$. Here $|E|$ is the Lebesgue measure of set $E$, and $\omega(E)=\int_{E}\omega(x)dx$.
%In this definition $\delta$ doesn't depend on $Q$ and $E$.

Suppose $\omega_1$ and $\omega_2$ are $A_1$ weights, and let $t$ be any positive real number. Then it is not hard to show that $\omega_1\omega_2^{-t}$ is an $A_\infty$ weight.
Conversely, the factorization theorem of $A_\infty$ weight proved by Peter Jones \cite{Jones} asserts: if $\omega$ is an $A_\infty$ weight, then there exist $\omega_1$ and $\omega_2$ which are both $A_1$ weights, and $t>1$ such that $\omega=\omega_1\omega_2^{-t}.$ Later, in the proof of the main theorem, we will decompose the volume form $e^{nu}$ into two pieces. The idea to decompose $e^{nu}$ is inspired by Peter Jones' factorization theorem. In our case, we give an explicit decomposition of the weight $e^{nu}$, and by analyzing each part in the decomposition we finally prove that $e^{nu}$ is a strong $A_\infty$ weight, a class of weights much stronger than $A_\infty$ that we will introduce in the following.

The notion of strong $A_\infty$ weight was first proposed by David and Semmes in \cite{DS1}.
Given a positive continuous weight $\omega$, we define $\delta_\omega(x,y)$ to be:
\begin{equation}\label{def1}
\delta_\omega(x,y):=\left(\int_{B_{xy}}\omega(z)dz\right)^{1/n},
\end{equation}
where $B_{xy}$ is the ball with diameter $|x-y|$ that contains $x$ and $y$.
One can prove that $\delta_\omega$ is only a quasi-distance in the sense that it satisfies the quasi-triangle inequality
$$\delta_\omega(x,y)\leq C(\delta_\omega(x,z)+ \delta_\omega(z,y)).$$
\noindent On the other hand, for a continuous function $\omega$, by taking infimum over all rectifiable arc $\gamma\subset B_{xy}$ connecting $x$ and $y$, one can define the $\omega$-distance to be
\begin{equation}\label{def2}d_\omega(x,y):=\inf_{\gamma}\int_\gamma \omega^{\frac{1}{n}}(s)|ds|.\end{equation}

If $\omega$ is an $A_\infty$ weight, then it is easy to prove (see for example Proposition 3.12 in \cite{S2})
\begin{equation}\label{A}
d_\omega(x,y)\leq C\delta_\omega(x,y)
\end{equation}
for all $x,y\in \mathbb{R}^n$.
If in addition to the above inequality, $\omega$ also satisfies the reverse inequality, i.e.
\begin{equation}\label{strong A}
\delta_\omega(x,y)\leq Cd_\omega(x,y),
\end{equation}
for all $x,y\in \mathbb{R}^n$, then we say $\omega$ is a strong $A_\infty$ weight, and $C$ is the bound of this strong $A_\infty$ weight.
%(\ref{strong A}) is equivalent to requiring that
%\begin{equation}\label{strong A'}
%\omega(B)^{1/n}\leq C (\omega\mbox{-length of }\gamma).
%\end{equation}

%\begin{equation}\label{def2}
%d_{w^{-}}(B_{xy}):= (\int_{B_{xy} } e^{nu_-(z)} dz )^{\frac{1}{n}}
%\end{equation}
%$d_{w^-}$ is a quasi-metric, in the sense that it satisfies the triangle inequality with ??
%Define
%\begin{equation}\label{def1}
%\delta_{w^-}(x,y):=\inf_{\gamma} \displaystyle \int_{\gamma}e^{u_-(\gamma(s))}ds,
%\end{equation}
%where infimum is taken over all curves  connecting $x$ and $y$, and $ds$ is the arc length.
%$\delta_{w^-}$ is a metric.

Every $A_1$ weight is a strong $A_\infty$ weight, but for any $p>1$ there is an $A_p$ weight which is not strong $A_\infty$. Conversely, for
any $p>1$ there is a strong $A_\infty$ weight which is not $A_p$. It is easy to verify by definition the function $|x|^{\alpha}$ is $A_1$ thus strong $A_\infty$ if $-n<\alpha\leq 0$; it is not $A_1$ but still strong $A_\infty$ if $\alpha>0$.
And $|x_1|^\alpha$ is not strong $A_\infty$ for any $\alpha>0$ as one can choose a curve $\gamma$ contained in the
$x_2$-axis.

The notion of strong $A_\infty$ weight was initially introduced in order to study weights that are comparable to the Jacobian of quasi-conformal maps.
It was proved by Gehring that the Jacobian of a quasiconformal map on $\mR$ is always a strong $A_\infty$ weight, and it was conjectured that the converse was assertive: every strong $A_\infty$ weight is comparable to the Jacobian
of a quasi-conformal map. Later, however, counter-examples were found by Semmes \cite{S3} in dimension $n\geq 3$, and by Laakso \cite{La} in dimension 2.
%showed that there is a strong $A_\infty$ weighted metric not bi-Lipschitz to Euclidean $\mathbb{R}^n$.
%Because of this, being comparable to a strong $A_\infty$ is not the characteristic property of the Jacobian of quasi-conformal map.
Nevertheless, it was proved by David and Semme that a strong $A_\infty$ weight satisfies the Sobolev inequality:
\begin{theorem}\label{2.1}\cite{DS1} Let $\omega$ be a strong $A_\infty$ weight. Then for $f\in C^{\infty}_0(\mR)$,
\begin{equation}\dstyle \left( \int_{\mathbb{R}^n}|f(x)|^{p^*}\omega(x)dx\right)^{1/{p^*}}\leq C \dstyle \left(\int_{\mathbb{R}^n}
(\omega^{-\frac{1}{n}}(x)|\nabla f(x)|)^p\omega(x)dx \right)^{1/p},
\end{equation}
where $1\leq p<n$, $p^*=\frac{np}{n-p}$. Take $p=1$, it is the standard isoperimetric inequality. The constant $C$ in the inequality only depends on the strong $A_\infty$ bound of $\omega$ and $n$.
\end{theorem}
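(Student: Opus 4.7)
The plan is to reduce the stated weighted Sobolev inequality to a weighted isoperimetric inequality on the metric--measure space $(\mR, d_\omega, \omega\,dx)$ induced by the strong $A_\infty$ hypothesis, and then establish that isoperimetric inequality by a Bojarski-style chain-of-balls argument. I would proceed in three steps.

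\emph{Step 1: metric--measure framework.} The strong $A_\infty$ equivalence $\delta_\omega\asymp d_\omega$, together with the definition $\delta_\omega(x,y)^n=\omega(B_{xy})$, promotes $d_\omega$ to a genuine metric for which $\omega\,dx$ is Ahlfors $n$-regular, i.e.\ $\omega(B_{d_\omega}(x,r))\asymp r^n$. In this geometry the integrand $\omega^{-1/n}|\nabla f|$ is a pointwise upper gradient for $d_\omega$: any rectifiable curve $\gamma$ has $d_\omega$-length $\int_\gamma \omega^{1/n}\,|ds|$, whence $|f(x)-f(y)|\le \int_\gamma |\nabla f|\,ds=\int_\gamma(\omega^{-1/n}|\nabla f|)\omega^{1/n}\,|ds|\le \sup_\gamma(\omega^{-1/n}|\nabla f|)\cdot \ell_{d_\omega}(\gamma)$. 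Correspondingly $\omega^{(n-1)/n}|\nabla f|$ is the natural surface-area density in this metric.

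\emph{Step 2: reduction to $p=1$ by Maz$^\prime$ya truncation.} Assuming the $p=1$ case $(\int|g|^{n/(n-1)}\omega\,dx)^{(n-1)/n}\le C\int\omega^{(n-1)/n}|\nabla g|\,dx$, I would apply it to $g=|f|^s$ with $s=p(n-1)/(n-p)$ chosen so that $sn/(n-1)=p^{*}$. Since $|\nabla g|\le s|f|^{s-1}|\nabla f|$, the right-hand side becomes $Cs\int|f|^{s-1}\omega^{-1/n}|\nabla f|\cdot\omega\,dx$. H\"older with exponents $p$ and $p'=p/(p-1)$ against $\omega\,dx$, using the computation $(s-1)p'=p^{*}$, bounds this by $Cs\,\|f\|_{L^{p^{*}}(\omega)}^{s-1}\,\|\omega^{-1/n}|\nabla f|\|_{L^{p}(\omega)}$. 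Since the left-hand side equals $\|f\|_{L^{p^{*}}(\omega)}^{s}$ (because $p^{*}(n-1)/n=s$), dividing yields the desired inequality.

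\emph{Step 3 and main obstacle: the $p=1$ case.} For $f\ge 0$, the layer-cake identity $f=\int_0^\infty 1_{\{f>t\}}\,dt$, Minkowski's inequality in $L^{n/(n-1)}(\omega\,dx)$, and the co-area formula $\int\omega^{(n-1)/n}|\nabla f|\,dx=\int_0^\infty\int_{\{f=t\}}\omega^{(n-1)/n}\,d\mathcal{H}^{n-1}\,dt$ together reduce the $p=1$ inequality to the weighted isoperimetric bound $\omega(E)^{(n-1)/n}\le C\int_{\partial E}\omega^{(n-1)/n}\,d\mathcal{H}^{n-1}$ for smooth bounded $E$. This is precisely the $n$-dimensional isoperimetric inequality of the Ahlfors $n$-regular space $(\mR, d_\omega, \omega\,dx)$, which I would derive from a $(1,1)$-Poincar\'e inequality on that space: connect a region of low $\omega$-average of $f$ to one of high $\omega$-average by a near-$d_\omega$-geodesic, cover it by Euclidean balls whose $\omega$-measures form a geometric sequence (possible because $\omega$ is doubling, a consequence of $A_\infty$), and telescope the standard Euclidean $L^1$-Poincar\'e estimates on each ball, converting Euclidean radii into $d_\omega$-radii via $\delta_\omega\asymp d_\omega$. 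This $(1,1)$-Poincar\'e step is the hard part: both halves of the strong $A_\infty$ equivalence are indispensable, $\delta_\omega\le Cd_\omega$ to secure the Ahlfors regularity of $\omega\,dx$ in $d_\omega$, and $d_\omega\le C\delta_\omega$ to ensure that chains of Euclidean balls genuinely approximate $d_\omega$-geodesics so that the telescoping terminates at the correct scale. The dependence of $C$ on only $n$ and the strong $A_\infty$ bound forces every estimate in the chain-of-balls and Maz$^\prime$ya truncation to be phrased in terms of doubling and Ahlfors regularity constants alone.
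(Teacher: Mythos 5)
The paper offers no proof of this statement: Theorem \ref{2.1} is quoted verbatim from David--Semmes \cite{DS1}, and the authors only use its consequences (the isoperimetric inequality, and in Section 5 the pointwise inequality \eqref{pointwisePoincare}). So the comparison is really with the argument of \cite{DS1}, which runs through exactly that two-point estimate
$|f(x)-f(y)|\le C\int_{B_{xy}}\bigl(\omega(B_{xu})^{-\frac{n-1}{n}}+\omega(B_{yu})^{-\frac{n-1}{n}}\bigr)|\nabla f(u)|\,\omega(u)^{\frac{n-1}{n}}\,du$
and then fractional-integration bounds on the Ahlfors $n$-regular space $(\mR,d_\omega,\omega\,dx)$. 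Your route --- reduce to $p=1$ by Maz'ya truncation, then to the weighted isoperimetric inequality by layer-cake and coarea --- is a genuinely different and legitimate organization; the exponent bookkeeping in your Step 2 is correct ($s=\tfrac{(n-1)p}{n-p}\ge 1$, $(s-1)p'=p^*$, $p^*\tfrac{n-1}{n}=s$), and Step 1 and the reduction to isoperimetry in Step 3 are standard. What the truncation route buys is that the hard analysis is isolated in a single $L^1$ statement; what the David--Semmes route buys is that the same pointwise lemma then yields all the Poincar\'e-type inequalities the present paper needs in Section 5.

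The one place your sketch materially underestimates the work is the weighted $(1,1)$-Poincar\'e step. Telescoping the \emph{unweighted} Euclidean $L^1$-Poincar\'e inequality along a chain of balls produces terms of the form $\frac{r_i}{|B_i|}\int_{B_i}|\nabla f|\,dx$, and converting these into $\omega(B_i)^{-\frac{n-1}{n}}\int_{B_i}|\nabla f|\,\omega^{\frac{n-1}{n}}\,dx$ cannot be done by a pointwise comparison of kernels: that would require $\frac{1}{|B_{xu}|}\int_{B_{xu}}\omega\le C\,\omega(u)$, i.e.\ the $A_1$ condition, which strong $A_\infty$ weights need not satisfy (the paper's own example $|x|^{\alpha}$ with $\alpha>0$ in Section 3 is strong $A_\infty$ but not $A_1$). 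The correct conversion uses H\"older together with the reverse H\"older inequality, equivalently the fact that $\omega\in A_p$ for some finite $p$; this is where $A_\infty$ enters beyond doubling and beyond the two-sided comparison $\delta_\omega\asymp d_\omega$ that you invoke. With that ingredient supplied --- it is precisely the content of \eqref{pointwisePoincare} and of Proposition 3.12 in \cite{S2} --- your argument closes, and every constant indeed depends only on $n$ and the strong $A_\infty$ bound.
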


% Then the isoperimetric inequality with respect to this weight holds. Namely, for $f\in C^{\infty}_0(\mR)$,
%\begin{equation}
%(\int_{\Omega}f(x)^{\frac{n}{n-1}} w(x)dx)^{\frac{n-1}{n}}\leq C \int_{\Omega} |\nabla f(x)| w(x)^{1-\frac{1}{n}} dx.
%\end{equation}

By taking $f$ to be a smooth approximation of the indicator function of domain $\Omega$, this implies the validity of the isoperimetric inequality with respect to the weight $\omega$.
In this paper, we will take $\omega= e^{nu}$, the volume form of $(\mR,e^{2u}|dx|^2)$. We aim to show $e^{nu}$ is a strong $A_\infty$ weight. By Theorem \ref{2.1}, this implies the isoperimetric inequality on $(\mR,e^{2u}|dx|^2)$:
$$ \dstyle (\int_{\Omega} e^{nu(x)}dx)^{\frac{n-1}{n}}\leq C \int_{\partial \Omega} e^{(n-1)u(x)}d\sigma_x,$$
or equivalently, for $g=e^{2u}|dx|^2$,
$$|\Omega|_g^{\frac{n-1}{n}}\leq C |\partial \Omega|_g.$$

A good reference for $A_p$ weights is Chapter 5 in \cite{Stein}.
For more details on strong $A_\infty$ weight, we refer the readers to \cite{DS1}, where the concept was initially proposed.

\section{Volume growth of geodesic balls}
We will study in this section that the volume growth of geodesic balls is Euclidean.
There are two different cases.
In Case 1, we suppose $\dstyle \int_M Q^+_g dv_g< 4\pi^2$, and $\dstyle \int_M Q^-_g dv_g<\infty$. Then the volume growth is Euclidean
\begin{equation}
C_2 r^n \leq Vol_g (B^g(0,r))\leq C_1 r^n,
\end{equation}
and the constants are uniformly controlled by $n$ and the integral of $Q$-curvature. More precisely, $C_i$, $i=1,2$ depend only on $n$, $\dstyle 4\pi^2-\int_M Q^+_g dv_g>0$, $\dstyle \int_M Q^-_g dv_g$. 
This uniform result is derived from strong $A_\infty$ property of the conformal factor $e^{nu}$, which was proved in \cite{YW15}.
In the other case, which we call Case 2, we assume weaker assumptions on the integrals of the $Q$-curvature, namely $\dstyle \int_M Q_g dv_g< 4\pi^2$, and $\dstyle \int_M |Q_g| dv_g<\infty$.
As a consequence, we show that the volume growth is still Euclidean. However, the constants are not uniformly controlled by the integrals of the $Q$-curvature.
In our proof of Theorem \ref{Theorem1}, since we need to localize arguments on each end of the locally conformally flat manifold, we will apply
the consequence of Case 2. Case 1 is a stronger result, and is of independent interest. Therefore, we also provide proof in
this section.

This result is going to be used in the proof of Theorem \ref{Theorem1}.
\begin{proposition}\label{Proposition1}
Let $B^g(x_0,r)$ be a geodesic ball centered at $x_0$, with radius $r$ measured by the metric $g$. Then
$$C_2 r^n \leq Vol_g (B^g(x_0,r))\leq C_1 r^n,$$
where $C_i$, $i=1,2$ depend only on $n$ and $\dstyle \int_M |Q_g| dv_g$.
\end{proposition}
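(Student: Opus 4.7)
The plan is to reduce the problem to an asymptotic expansion of $u$ at infinity, then use that expansion to compare geodesic balls $B^g(x_0,r)$ with Euclidean balls. Setting $\alpha := \frac{1}{c_n}\int_M Q_g\,dv_g$, which satisfies $\alpha < 1$ by the hypothesis, the integral formula for $u$ yields a splitting
\[
u(x) = -\alpha \log |x| + h(x),
\]
where $h$ is controlled in terms of $n$ and $\int_M |Q_g|\,dv_g$. This is the same mechanism as the Chang--Qing--Yang analysis recalled in Section 2: convolving the logarithmic kernel against a finite signed measure of bounded total variation produces logarithmic growth at infinity plus a uniformly controlled remainder, while local $L^\infty$-type bounds on $u$ come from local integrability of $\log$.

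With the expansion in hand, the next step is to compare the geodesic metric with the Euclidean one. Integrating $e^u$ along the Euclidean radial segment from $x_0$ to $x$ shows, for $|x-x_0|$ beyond a threshold depending only on $\int_M |Q_g|\,dv_g$, that the geodesic distance $d_g(x_0,x)$ lies between $c_1|x-x_0|^{1-\alpha}$ and $c_2|x-x_0|^{1-\alpha}$. Inverting these estimates gives
\[
B_{\mathrm{Euc}}(x_0,\, c\,r^{1/(1-\alpha)}) \subseteq B^g(x_0,r) \subseteq B_{\mathrm{Euc}}(x_0,\, C\,r^{1/(1-\alpha)}).
\]
Computing the volume then uses $e^{nu}(x) \asymp |x|^{-n\alpha}$ on the relevant Euclidean annulus:
\[
\mathrm{Vol}_g(B^g(x_0,r)) = \int_{B^g(x_0,r)} e^{nu}\,dx \asymp \int_{|x| \leq R} |x|^{-n\alpha}\,dx \asymp R^{n(1-\alpha)} = r^n,
\]
with $R = c\,r^{1/(1-\alpha)}$. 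For small $r$ where the asymptotic does not dominate, one uses uniform $L^\infty$ bounds on $u$ on a compact neighborhood of $x_0$ to compare geodesic and Euclidean balls directly, yielding $r^n$ growth with the same type of dependence on the data.

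The main obstacle is extracting constants depending only on $n$ and $\int_M|Q_g|\,dv_g$, uniformly in the center $x_0$ and in the fine structure of $Q_g$ (for example, the separate sizes of $Q^+$ and $Q^-$). Uniformity in $x_0$ follows from the asymptotic equivalence $|x| \sim |x - x_0|$ at infinity combined with uniform estimates on the logarithmic kernel under translation of the base point; controlling the remainder $h$ by the total variation $\int_M|Q_g|\,dv_g$ requires a careful $L^1$ splitting of the convolution defining $u$ into a near-field piece (bounded by local log-integrability) and a far-field piece (producing the logarithmic asymptotic). This is where the strict inequality $\alpha < 1$ is essential, as it ensures that the exponent $1/(1-\alpha)$ remains finite and that the integrand $|x|^{-n\alpha}$ is locally integrable at the origin of the annular integration.
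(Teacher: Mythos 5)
Your approach is genuinely different from the paper's, and it contains a gap at its core that I don't think can be repaired within the strategy you outline.

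The paper proves the proposition by quoting two facts established in \cite{YW15}: (i) the isoperimetric inequality holds on $(M^n,g)$ with constant controlled by $n$ and the $Q$-curvature integrals, which immediately gives the lower volume bound $C_2 r^n \leq \mathrm{Vol}_g(B^g(x_0,r))$; and (ii) $e^{nu}$ is a strong $A_\infty$ weight with bound controlled by the same data. The upper bound then follows by combining the doubling property with the defining inequality $\delta_\omega(x,y)\leq C\,d_\omega(x,y)$ of strong $A_\infty$ weights, applied to the two boundary points realizing the Euclidean diameter of the geodesic ball. The essential point is that strong $A_\infty$ is a scale- and location-invariant comparison between $\delta_\omega$ and $d_\omega$, so its constant passes uniformly to balls of every radius and center.

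Your proposal replaces this with an asymptotic expansion $u(x)=-\alpha\log|x|+h(x)$ and then argues by comparing geodesic and Euclidean balls. The problematic step is the claim that $h$ (equivalently, that $u$ on a compact neighborhood of $x_0$) is controlled in $L^\infty$ by $n$ and $\int_M|Q_g|\,dv_g$. This is false: $u$ is a convolution of the logarithmic kernel against a signed measure of bounded total variation, and such a convolution has no uniform pointwise bound in terms of total variation alone. If you concentrate a fixed amount of $Q$-curvature mass near a point, $\|u\|_{L^\infty(K)}$ on a neighborhood $K$ of that point blows up while $\int_M|Q_g|\,dv_g$ stays fixed. Consequently the constants in your local comparison of $B^g(x_0,r)$ with Euclidean balls cannot depend only on $n$ and the total variation, which is exactly the uniformity the proposition asserts. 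There is also a secondary issue: integrating $e^u$ along the Euclidean radial segment gives only an upper bound for $d_g(x_0,x)$, since geodesic distance is an infimum over all paths; a lower bound needs a separate argument. The strong $A_\infty$ framework is precisely the tool that packages these two difficulties (uniformity at all scales and centers, and two-sided distance comparison) into a single controlled quantity, and that is why the paper's proof goes through it rather than through the asymptotic expansion, which only sees behavior at infinity.
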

\begin{proof}
By the main theorem of \cite{YW15}, on a conformally flat manifold $M$ with totally finite $Q$-curvature and normal metric $g=e^{2u} |dx|^2$,
the isoperimetric inequality is valid. Moreover, the isoperimetric constant is uniformly controlled by $n$ and $\dstyle \int_M |Q_g| dv_g$.
This gives directly the lower bound of volume growth of geodesic balls. Namely, there exists $C_2$, depending only on the isoperimetric constant,
thus only on $n$ and $\dstyle \int_M |Q_g| dv_g$, such that $$C_2 r^n \leq Vol_g (B^g(0,r)).$$
On the other hand, \cite{YW15} also proves that the volume form $e^{nu}$ is a strong $A_\infty$ weight.
We recall the definition of strong $A_\infty$ weights.
\begin{definition}
Given a positive continuous weight $\omega$, we define $\delta_\omega(x,y)$ to be:
\begin{equation}\label{def1}
\delta_\omega(x,y):=\left(\int_{B_{xy}}\omega(z)dz\right)^{1/n},
\end{equation}
where $B_{xy}$ is the ball with diameter $|x-y|$ that contains $x$ and $y$.
\noindent On the other hand, for a continuous function $\omega$, by taking infimum over all rectifiable arc $\gamma\subset B_{xy}$ connecting $x$ and $y$, one can define the $\omega$-distance to be
\begin{equation}\label{def2}d_\omega(x,y):=\inf_{\gamma}\int_\gamma \omega^{\frac{1}{n}}(s)|ds|.\end{equation}
If $\omega$ satisfies
\begin{equation}\label{A}
d_\omega(x,y)\leq C\delta_\omega(x,y);
\end{equation}
and the reverse inequality
\begin{equation}\label{strong A}
\delta_\omega(x,y)\leq Cd_\omega(x,y),
\end{equation}
for all $x,y\in \mathbb{R}^n$, then we say $\omega$ is a strong $A_\infty$ weight, and $C$ is the bound of this strong $A_\infty$ weight.
\end{definition}

Being a strong $A_\infty$ weight, $e^{nu}$ relates the volume of a geodesic sphere and the distance function.
Given a geodesic ball $B^g(0,r)$, the Euclidean diameter of this ball is realized by two points $x$, $y$ on the boundary $\partial B^g(0,r)$.
Thus $B^g(x_0,r)\subset B^0(x, R)$, where $R$ is the Euclidean distance of $x$, $y$, and $B^0(x,R)$
denotes the Euclidean ball centered at $x$ with radius $R$.

Let us denote by $p$ the middle point of $x$ and $y$. Then by the doubling property of strong $A_\infty$ weight $e^{nu}$,
\begin{equation}
Vol_g(B^g(0,r)\leq  Vol_g(B^0(x, R))  \leq  C_3 Vol_g (B^0(p, R/2)).
\end{equation}
Notice $x$, $y$ lie on $\partial B^0(p, R/2)$, and the line segment between them is the diameter of this Euclidean ball.
By the definition of strong $A_\infty$ weight,
\begin{equation}
Vol_g (B^0(p, R/2))\leq C_4 d_g(x,y)^n.
\end{equation}
Using the triangle inequality, it is obvious that
\begin{equation}d_g(x,y)\leq 2 r. \end{equation}
Thus we obtain
\begin{equation}
Vol_g(B^g(x_0,r)\leq C_1 r^n.\end{equation}
$C_3$ depends only on the strong $A_\infty$ bound of $e^{nu}$, and $C_4$ depends on $n$.
Thus $C_1$ that is determined by $C_3$ and $C_4$ depends only on $n$ and $\dstyle \int_M |Q_g| dv_g$.
\end{proof}

\section{$p$-Poincar\'{e} inequality}

By \cite{YW15}, $\omega=e^{nu}$ is strong $A_\infty$. Thus by \cite{DS1},
we have pointwise Poincar\'{e} inequality: for any $x$, $y\in B$, 
\begin{equation}\label{pointwisePoincare}
|f(x)-f(y)|\leq C \int_{B_{xy}} (\omega(B_{xu})^{-\frac{n-1}{n}}+\omega(B_{yu})^{-\frac{n-1}{n}})
|\nabla f(u)| \omega(u)^{\frac{n-1}{n}}du.
%\end{split}
\end{equation}
Here we denote by $B_{xy}$ the smallest Euclidean ball that coveres $x$ and $y$.

\begin{lemma}
For a strong $A_\infty$ weight $\omega$, the above pointwise Poincar\'{e} inequality
implies the $p$-Poincar\'{e} inequality, $p>1$: For any Euclidean ball $B$, let $2B$ denote the concentric ball with double radius. Then
\begin{equation}\label{pPoincare}
\begin{split}
&\int_B|f(x)-f_{B,\omega}|^p\omega(x)dx\\
\leq &C\omega(B)^{\frac{p}{n}} \int_{2B} 
|\nabla f(u)|^p \omega(u)^{1-\frac{p}{n}}du,
\end{split}
\end{equation}
where $\displaystyle f_{B,\omega}=\frac{1}{\omega(B)}\int_B f(x)\omega(x)dx$.
\end{lemma}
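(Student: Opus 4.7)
The strategy is to bootstrap the pointwise Poincar\'e inequality \equ{pointwisePoincare} into an $L^p$ statement via a reduction by Jensen's inequality, and then to recognize the resulting expression as a localized Riesz potential in the metric measure space naturally associated to the strong $A_\infty$ weight $\omega$.

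First, Jensen's inequality applied to the probability measure $\omega(B)^{-1}\omega(y)\,dy$ on $B$ yields $|f(x)-f_{B,\omega}|^p \le \omega(B)^{-1}\int_B|f(x)-f(y)|^p\omega(y)\,dy$. Integrating in $x$ against $\omega(x)\,dx$ and using Fubini reduces the claim to bounding the symmetric double integral $\omega(B)^{-1}\iint_{B\times B}|f(x)-f(y)|^p\omega(x)\omega(y)\,dx\,dy$. For $x,y\in B$ the ball $B_{xy}$ is contained in $2B$, so applying \equ{pointwisePoincare} together with the symmetry of its right-hand side in $(x,y)$ gives $|f(x)-f(y)| \le C(I(x)+I(y))$ with
\[ I(x) := \int_{2B}\omega(B_{xu})^{-(n-1)/n}|\nabla f(u)|\omega(u)^{(n-1)/n}\,du. \]
Using $(a+b)^p \le 2^{p-1}(a^p+b^p)$, the entire lemma reduces to the single-variable estimate
\[ \int_B I(x)^p\omega(x)\,dx \le C\,\omega(B)^{p/n}\int_{2B}|\nabla f(u)|^p\omega(u)^{1-p/n}\,du. \]

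For this, I would invoke the strong $A_\infty$ comparison $\omega(B_{xu})^{1/n}\asymp d_\omega(x,u)$, which implies that the measure $d\mu := \omega(u)\,du$ is $n$-Ahlfors regular with respect to $d_\omega$. Setting $G(u) := |\nabla f(u)|\omega(u)^{-1/n}\chi_{2B}(u)$ and rewriting $\omega(u)^{(n-1)/n}\,du = \omega(u)^{-1/n}\,d\mu(u)$, the definition of $I$ becomes
\[ I(x) \asymp \int_{2B} \frac{G(u)}{d_\omega(x,u)^{n-1}}\,d\mu(u), \]
which is precisely the order-$1$ Riesz potential $I_1 G$ in the metric measure space $(\mR, d_\omega, \mu)$. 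The Hardy--Littlewood--Sobolev estimate in this space (proved by the standard dyadic decomposition into $d_\omega$-annuli around $x$, using the $n$-Ahlfors regularity of $\mu$) gives $\|I_1 G\|_{L^{p^*}(\mu)}\lesssim \|G\|_{L^p(\mu)}$ for $1<p<n$ with $1/p^* = 1/p - 1/n$. Restricting to $B$ and applying H\"older's inequality with the finite measure $\mu(B)$ yields $\|I\|_{L^p(B,\mu)} \lesssim \mu(B)^{1/n}\|G\|_{L^p(\mu)}$; raising to the $p$-th power and using $\mu(B) = \omega(B)$ together with $\|G\|_{L^p(\mu)}^p = \int_{2B}|\nabla f|^p\omega^{1-p/n}\,du$ gives \equ{pPoincare} exactly. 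The supercritical range $p\ge n$ is handled either by BMO or H\"older endpoint substitutes for HLS on bounded regions, or more uniformly by invoking the Hajlasz--Koskela principle that a pointwise Poincar\'e inequality automatically self-improves to a $p$-Poincar\'e inequality for every $p>1$.

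The main analytic obstacle is the Hardy--Littlewood--Sobolev estimate in the non-Euclidean space $(\mR, d_\omega, \mu)$; it is essential that $\omega$ be \emph{strong} $A_\infty$ and not merely $A_\infty$, because only the two-sided comparison $\omega(B_{xu})^{1/n}\asymp d_\omega(x,u)$ permits replacing the awkward kernel $\omega(B_{xu})^{-(n-1)/n}$ by the clean Riesz kernel $d_\omega(x,u)^{1-n}$ and thereby recycling the classical chaining proof in this weighted setting.
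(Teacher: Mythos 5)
Your argument is correct in spirit but takes a genuinely different route from the paper's. After the Jensen and symmetrization steps (which the paper also uses, implicitly, to pass from $f_{B,\omega}$ to the double integral over $B\times B$), you identify $I(x)$ as an order-$1$ Riesz potential in the metric measure space $(\mathbb{R}^n, d_\omega, \omega\,dx)$ and invoke Hardy--Littlewood--Sobolev there; the Ahlfors-regularity claim is indeed a consequence of the two-sided comparison $\delta_\omega\asymp d_\omega$, so strong $A_\infty$ is exactly what is needed, as you say. The paper instead applies H\"older's inequality directly to the $p$-th power of \equ{pointwisePoincare} with respect to the measure $\bigl(\omega(B_{xu})^{-(n-1)/n}+\omega(B_{yu})^{-(n-1)/n}\bigr)\omega(u)\,du$, which reduces everything to the single elementary kernel estimate
$\int_{2B}\omega(B_{xu})^{-(n-1)/n}\omega(u)\,du\le C\,\omega(B)^{1/n}$,
proved by a dyadic decomposition of $2B$ into the level sets $\{u:\omega(B_{xu})\approx 2^{-k}\omega(2B)\}$ using only the doubling property of $\omega$. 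This avoids the Riesz-potential machinery entirely and, importantly, is $p$-independent: the kernel bound and the H\"older step work uniformly for every $p>1$. Your HLS route is conceptually clean but restricts to $1<p<n$ and you must then wave at BMO/endpoint substitutes or cite Hajlasz--Koskela self-improvement to cover $p\ge n$; that is a real gap in what you wrote, whereas the paper's dyadic argument handles all $p>1$ in one pass.
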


\begin{proof}
 By pointwise Poincar\'{e} inequality \eqref{pointwisePoincare}
 \begin{equation}
|f(x)-f(y)|^p\leq C (\int_{B_{xy}} (\omega(B_{xu})^{-\frac{n-1}{n}}+\omega(B_{yu})^{-\frac{n-1}{n}})
|\nabla f(u)|  \omega(u)^{\frac{n-1}{n}}du)^p.
%\end{split}
\end{equation}
Applying H\"{o}lder's inequality, we obtain
 \begin{equation}\label{5.1}
\begin{split}
|f(x)-f(y)|^p\leq & C \Big(\int_{B_{xy}} 
|\nabla f(u)|^p \omega(u)^{\frac{-p}{n}}   \Big(\omega(B_{xu})^{-\frac{n-1}{n}}+\omega(B_{yu})^{-\frac{n-1}{n}}\Big) \omega(u) du\Big)\\
& \cdot \Big(\int_{B_{xy}}\Big(\omega(B_{xu})^{-\frac{n-1}{n}}+\omega(B_{yu})^{-\frac{n-1}{n}}\Big) \omega(u)du\Big)^{p-1}.
\end{split}
\end{equation}

\begin{equation}
\begin{split}
\int_{2B}\omega{(B_{xu})}^{-\frac{n-1}{n}}  \omega(u)du
\leq & C \sum_{k\geq 0}\int_{u\in 2B, 
\omega_{(B_{xu})}\approx 2^{-k} \omega(2B)}
2^{\frac{k(n-1)}{n}}\omega(2B)^{-\frac{n-1}{n}}\omega(u)du\\
\leq& C \omega(B)^{-\frac{n-1}{n}}
\sum_{k\geq 0} 2^{\frac{k(n-1)}{n}}2^{-k}\omega(B)\\
=&C \omega(B)^{\frac{1}{n}}.\\
\end{split}
\end{equation}
Therefore,
\begin{equation}\label{5.3}
\begin{split}
&\int_{B_{xy}}\omega(B_{xu})^{-\frac{n-1}{n}}+\omega(B_{yu})^{-\frac{n-1}{n}}\Big) \omega(u)du\\
\leq & 2C\int_{2B}\omega(B_{xu})^{-\frac{n-1}{n}} \omega(u)du\\
\leq & C \omega(B)^{\frac{1}{n}}.\\
\end{split}
\end{equation}
Using it in \eqref{5.1}, we have 
\begin{equation}\label{5.1}
\begin{split}
& |f(x)-f(y)|^p\\
\leq & C\omega(B)^{\frac{p-1}{n}} \int_{B_{xy}} 
|\nabla f(u)|^p \omega(u)^{\frac{-p}{n}}   \Big(\omega(B_{xu})^{-\frac{n-1}{n}}+\omega(B_{yu})^{-\frac{n-1}{n}}\Big) \omega(u)du .\\
\end{split}
\end{equation}
Hence 
\begin{equation}\label{5.2}
\begin{split}
&\int_B\int_B|f(x)-f(y)|^p \omega(x)\omega(y)dxdy\\
\leq & C\omega(B)^{\frac{p-1}{n}} 
\int_{B_{xy}} \int_B\int_B
|\nabla f(u)|^p \omega(u)^{\frac{-p}{n}} 
\Big(\omega(B_{xu})^{-\frac{n-1}{n}}+\omega(B_{yu})^{-\frac{n-1}{n}}\Big)\\
&\hspace{30mm}  \omega(x)\omega(y)dxdy  \omega(u) du.\\
\end{split}
\end{equation}
We use inequality \eqref{5.3} again to see that 
\begin{equation}
\int_B\int_B
\Big(\omega(B_{xu})^{-\frac{n-1}{n}}+\omega(B_{yu})^{-\frac{n-1}{n}}\Big)  \omega(x)\omega(y)dxdy \leq 
C\omega(B)^{1+\frac{1}{n}}.
 \end{equation}
Therefore \eqref{5.2} becomes
\begin{equation}\label{strongPPoincare}
\begin{split}
&\int_B\int_B|f(x)-f(y)|^p \omega(x)\omega(y)dxdy\\
\leq & C\omega(B)^{1+\frac{p}{n}} 
\int_{B_{xy}} 
|\nabla f(u)|^p \omega(u)^{\frac{-p}{n}} 
\omega(u)du .\\
\end{split}
\end{equation}
Therefore, 
\begin{equation}
\begin{split}
&\int_B|f(x)-f_{B,\omega}|^p\omega(x)dx\\
\leq &C\omega(B)^{\frac{p}{n}} \int_{2B} 
|\nabla f(u)|^p \omega(u)^{1-\frac{p}{n}}du,
\end{split}
\end{equation}
where $\displaystyle f_{B,\omega}=\frac{1}{\omega(B)}\int_B f(x)\omega(x)dx$.\\
Since $g=e^{2u}|dx|^2$, $dv_g=e^{nu}dx$, this is equivalent to 
\begin{equation}
%\begin{split}
\int_B|f(x)-f_{B,\omega}|^pdv_g\leq C vol_g(B)^{\frac{p}{n}} \int_{2B} 
|\nabla_g f(u)|^p dv_g.
%\end{split}
\end{equation}
\end{proof}
In particular, let $p=2$, we have
\begin{equation}\label{2Poincare}
%\begin{split}
\int_B|f(x)-f_{B,\omega}|^2dv_g\leq C vol_g(B)^{\frac{2}{n}} \int_{2B} 
|\nabla_g f(u)|^2 dv_g.
%\end{split}
\end{equation}

We define the operator $m$ by multiplication with $\chi_B$
$$m (f):=f  \chi_B;$$
the measure $\mu_2$ by $$\mu_2:=\omega(x)\chi_{2B} dx;$$
and the weighted divergence operator $ L_{\mu_2}$ by $$ L_{\mu_2} f :=\Delta_g f
% div_{\mu_2} f
=\sum_{i=1}^n\frac{1}{\omega(x)}\partial_i (\omega(x)^{1-\frac{2}{n}} \partial_i f).$$
Thus \eqref{2Poincare} is equivalent to the following statement:
for $f_{B,\omega}=0$
\begin{equation}\label{measure}
\int_{\mathbb{R}^n} m(f)^2d\mu_2\leq C vol_g(B)^{\frac{2}{n}} \int_{\mathbb{R}^n} 
|L_{\mu_2}^{1/2} f |^2 d\mu_2.
\end{equation}
Thus $$\|m(f)\|_{L^2(\mathbb{R}^n, \mu_2)}\leq Cvol_g(B)^{\frac{1}{n}}  \|L_{\mu_2}^{1/2} f\|_{L^2(\mathbb{R}^n, \mu_2)}. $$
By spectral theory, 
$m(f)\leq C L_{\mu_2}^{1/2} f$ in $L^2(\mathbb{R}^n, \mu_2)$ implies
$ m^\alpha(f)\leq C L_{\mu_2}^{\alpha/2} f $ in $L^2(\mathbb{R}^n, \mu_2)$. Thus for $f$ satisfying $f_{B,\omega}=0$,
$$ \int_{\mathbb{R}^n}  m^\alpha(f)\cdot f d\mu_2\leq C \int_{\mathbb{R}^n} L_{\mu_2}^{\alpha/2} f \cdot f d\mu_2. $$
In other words, for any $f$
$$ 
(\int_B|f(x)-f_{B,\omega}|^2\omega(x)dx)^{1/2}=(\int_{\mathbb{R}^n}  m^\alpha(f)\cdot f d\mu_2)^{1/2}\leq C\|L^{\alpha/4}_{\mu_2}f\|_{L^2(\mathbb{R}^n, \mu_2)}.$$

\section{Control of $\|L^{\alpha/4}_{\mu_2}f\|_{L^2} $}

We start with several lemmas. The following lemma provides off-diagonal estimates. This is simply on an energy inequality and we refer the reader to \cite{RS} for a proof. 

\begin{lemma} \label{off} There exists $C$ with the following property: for all closed disjoint subsets
$E,F\subset G$ with $\mbox{d}(E,F)=:d>0$, all function $f\in
L^2(G,d\mu_M)$ supported in $E$ and all $t>0$,
$$
\left\Vert (\mbox{I}+t \, L_{\mu_2})^{-1}f\right\Vert_{L^2(F,\mu_2)}+\left\Vert t \,
 L_{\mu_2}(\mbox{I}+t \, L_{\mu_2})^{-1}f\right\Vert_{L^2(F,\mu_2)}\leq $$
 $$ 8 \, e^{-C \,
 \frac{d}{\sqrt t}} \left\Vert f\right\Vert_{L^2(E,\mu_2)}.
$$
\end{lemma}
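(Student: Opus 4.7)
The plan is to establish the two resolvent bounds via the Davies perturbation (Gaffney) trick, using the fact that $L_{\mu_2}$ is a nonnegative self-adjoint operator on $L^2(\mathbb{R}^n,\mu_2)$ associated to the Dirichlet form
\begin{equation*}
\mathcal{E}(f,h)=\int \nabla f\cdot \nabla h\,\omega^{1-\tfrac{2}{n}}\,dx,
\end{equation*}
as follows from the definition $L_{\mu_2}f=\omega^{-1}\operatorname{div}(\omega^{1-2/n}\nabla f)$ and integration by parts against $d\mu_2=\omega\,dx$.

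Let $f$ be supported in $E$, set $u=(\mathrm{I}+tL_{\mu_2})^{-1}f$, and fix a bounded Lipschitz weight $\phi$ that is $1$-Lipschitz with respect to the metric $g$, i.e.\ $|\nabla\phi|^2\le \omega^{2/n}$ a.e.\ (for instance $\phi(x)=\min(d_g(x,E),R)$ with $R\to\infty$ in the end). For $\rho>0$, I would test the resolvent identity $u+tL_{\mu_2}u=f$ against $e^{2\rho\phi}u$ to get
\begin{equation*}
\int u^2 e^{2\rho\phi}\,d\mu_2+t\int\bigl(|\nabla u|^2+2\rho\,u\,\nabla u\!\cdot\!\nabla\phi\bigr)e^{2\rho\phi}\omega^{1-\tfrac{2}{n}}dx=\int fu\,e^{2\rho\phi}\,d\mu_2,
\end{equation*}
after expanding $\mathcal{E}(u,e^{2\rho\phi}u)$. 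Young's inequality $|2\rho\,u\,\nabla u\!\cdot\!\nabla\phi|\le |\nabla u|^2+\rho^2 u^2|\nabla\phi|^2$ then absorbs the $|\nabla u|^2$ term on the left, and the Lipschitz bound $|\nabla\phi|^2\omega^{1-2/n}\le\omega$ converts the remaining $\rho^2 u^2$ term back into an integral against $d\mu_2$, yielding
\begin{equation*}
\int u^2 e^{2\rho\phi}\,d\mu_2\le \int fu\,e^{2\rho\phi}\,d\mu_2+\rho^2 t\int u^2 e^{2\rho\phi}\,d\mu_2.
\end{equation*}

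Choosing $\rho^2 t=\tfrac{1}{2}$, i.e.\ $\rho=1/\sqrt{2t}$, absorbs the last term and Cauchy--Schwarz on the right gives $\int u^2 e^{2\rho\phi}d\mu_2\le 4\int f^2 e^{2\rho\phi}d\mu_2$. Using $\phi=0$ on $\operatorname{supp}(f)\subset E$ and $\phi\ge d$ on $F$, I deduce
\begin{equation*}
\|u\|_{L^2(F,\mu_2)}\le 2\,e^{-d/\sqrt{2t}}\|f\|_{L^2(E,\mu_2)}.
\end{equation*}
The second term is handled for free: since $f$ vanishes on $F$ while $tL_{\mu_2}u=f-u$, we get $\|tL_{\mu_2}(\mathrm{I}+tL_{\mu_2})^{-1}f\|_{L^2(F,\mu_2)}=\|u\|_{L^2(F,\mu_2)}$, so adding the two contributions produces the claim with constant $8$ and $C=1/\sqrt{2}$.

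The main obstacle is selecting the correct class of Davies weights. The exponential weight $e^{\rho\phi}$ must be compatible with the \emph{intrinsic} metric of $\mathcal{E}$ rather than the Euclidean one; a direct Euclidean choice of $\phi$ would leave a remainder with the wrong power of $\omega$ and fail to close the estimate. Working with $\phi=d_g(\cdot,E)$ (suitably truncated to ensure boundedness and admissibility of $e^{2\rho\phi}u$ as a test function, then passing to the limit by monotone convergence) is exactly what makes the absorption step work, and this is why the resulting distance $d$ in the exponential decay is measured in $d_g$.
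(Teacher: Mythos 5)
Your proof is correct and it is the same energy-inequality / Davies--Gaffney argument that the paper invokes (the paper does not write out the proof but refers to~\cite{RS}, where this resolvent off-diagonal bound is proved by precisely this kind of weighted energy estimate). You also correctly identify the one point that is genuinely specific to this setting: the exponential weight must be built from a $1$-Lipschitz function for the \emph{intrinsic} metric $g=\omega^{2/n}|dx|^2$, i.e.\ $|\nabla\phi|^2\le\omega^{2/n}$, so that the term $\rho^2 t\int u^2|\nabla\phi|^2 e^{2\rho\phi}\omega^{1-2/n}dx$ closes back into $\rho^2 t\int u^2 e^{2\rho\phi}d\mu_2$; this is also exactly what makes the exponential decay come out in the geodesic distance $d_g$, as required by Lemma~\ref{3.111}.
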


The next lemma is just spectral theory (see \cite{RS} for a proof). 
\begin{lemma}
Let $\alpha \in (0,2)$. Let $\mathcal{D}(L_{\mu_2})$ be the domain of functions on which $L_{\mu_2}$
is well defined. There exists $C>0$ such that 
for all $f\in \mathcal{D}(L_{\mu_2})$,
\begin{equation}
\|L^{\alpha/4}_{\mu_2}f  \|^2_{L^2(\mathbb{R}^n, \mu_2)}\leq C
\displaystyle \int_0^{\infty}t^{-1-\alpha/2} \|tL_{\mu_2}(I+tL_{\mu_2})^{-1}f\|^2_{L^2(\mathbb{R}^n,\mu_2)}dt.
\end{equation}
\end{lemma}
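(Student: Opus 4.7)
The proof is a direct application of the spectral theorem to $L_{\mu_2}$. The operator $L_{\mu_2}$, as described via the expression $L_{\mu_2} f = \omega^{-1}\sum_i \partial_i(\omega^{1-2/n}\partial_i f)$, is precisely the generator of the Dirichlet form $\mathcal{E}(f,f) = \int |\nabla_g f|^2\,d\mu_2$; since this form is nonnegative, densely defined, and closable on $L^2(\mathbb{R}^n,\mu_2)$, its Friedrichs extension is a nonnegative self-adjoint operator. Consequently, $L_{\mu_2}$ admits a spectral resolution $L_{\mu_2} = \int_0^\infty \lambda\, dE_\lambda$, and for every $f\in \mathcal{D}(L_{\mu_2})$ we may introduce the spectral measure $d\mu_f(\lambda) := d\|E_\lambda f\|^2_{L^2(\mathbb{R}^n,\mu_2)}$.

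The plan is to rewrite both sides of the claim as integrals against $d\mu_f$ and reduce to a scalar identity. By functional calculus,
\begin{equation*}
\|L_{\mu_2}^{\alpha/4}f\|^2_{L^2(\mathbb{R}^n,\mu_2)} = \int_0^\infty \lambda^{\alpha/2}\,d\mu_f(\lambda),
\qquad
\|tL_{\mu_2}(I+tL_{\mu_2})^{-1}f\|^2_{L^2(\mathbb{R}^n,\mu_2)} = \int_0^\infty \frac{t^2\lambda^2}{(1+t\lambda)^2}\,d\mu_f(\lambda).
\end{equation*}
Applying Tonelli's theorem to the right-hand side of the claimed inequality and performing the substitution $s = t\lambda$ in the inner integral gives
\begin{equation*}
\int_0^\infty t^{-1-\alpha/2}\|tL_{\mu_2}(I+tL_{\mu_2})^{-1}f\|^2_{L^2(\mathbb{R}^n,\mu_2)}\,dt
= \int_0^\infty \left(\int_0^\infty \frac{t^{1-\alpha/2}\lambda^2}{(1+t\lambda)^2}\,dt\right) d\mu_f(\lambda)
= c_\alpha \int_0^\infty \lambda^{\alpha/2}\,d\mu_f(\lambda),
\end{equation*}
where $c_\alpha = \int_0^\infty \frac{s^{1-\alpha/2}}{(1+s)^2}\,ds = B(2-\alpha/2,\alpha/2)$. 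The latter Beta integral is finite precisely when $0 < \alpha/2 < 2$, which is guaranteed by the hypothesis $\alpha \in (0,2)$. Setting $C = 1/c_\alpha$ yields the asserted inequality as an equality.

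The computation is essentially routine once the spectral representation is available, so the only delicate ingredient is justifying that $L_{\mu_2}$ is self-adjoint with nonnegative spectrum on $L^2(\mathbb{R}^n,\mu_2)$. This is not really an obstacle in our setting: the weight $\omega = e^{nu}$ is smooth and strictly positive, and both $\omega$ and $\omega^{1-2/n}$ are bounded above and below on the compactly supported set $2B$ underlying $\mu_2$, so the weighted Dirichlet form is comparable to the standard one on $2B$, and the Friedrichs construction proceeds in the usual way. With self-adjointness in hand, no further work is needed beyond the convergence of the Beta integral.
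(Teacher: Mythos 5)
Your proposal is correct, and it follows exactly the route the paper itself indicates: the paper states the lemma ``is just spectral theory'' and defers to \cite{RS}, and your argument is the standard spectral-calculus computation. The substitution $s=t\lambda$ reducing the right-hand side to $B(2-\alpha/2,\alpha/2)\int_0^\infty \lambda^{\alpha/2}\,d\mu_f(\lambda)$ is carried out correctly, and your observation that the bound is in fact an equality (with the only hypothesis-dependent point being the convergence of the Beta integral, which holds for all $\alpha\in(0,4)$ and a fortiori for $\alpha\in(0,2)$) is accurate.
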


\begin{lemma}\label{OverlapCovering}Fix $t>0$.
 $B^g(x^t_j, \sqrt{t})$, $j=1,...$ is a Vitali's covering
 of $(M^n, g)=(\mathbb{R}^n, e^{2u}|dx|^2)$, such that they are mutually disjoint, and
$$M^n= \bigcup_{j\in \mathbb{N}} B^g(x^t_j,2 \sqrt{t}). $$ Then there exists $\tilde{C}$ such that 
 for all $\theta>1$ and $x\in M^n$, there are at most $\tilde{C}\theta^{2k}$ indices $j$
satisfying $d_g(x,y)\leq \theta \sqrt{t}$.
\end{lemma}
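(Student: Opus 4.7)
The plan is to apply the two-sided volume growth estimate of Proposition~\ref{Proposition1} via a standard packing argument. First I would fix $x \in M^n$ and $\theta > 1$, and let $N(x,\theta)$ denote the number of indices $j$ with $d_g(x, x_j^t) \leq \theta \sqrt{t}$. For each such $j$, the triangle inequality for the geodesic distance $d_g$ gives $B^g(x_j^t, \sqrt{t}) \subset B^g(x, (\theta+1)\sqrt{t})$, and since $\theta > 1$ the latter is contained in $B^g(x, 2\theta\sqrt{t})$.

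Since the balls $\{B^g(x_j^t, \sqrt{t})\}$ of the Vitali selection are pairwise disjoint by hypothesis, their $g$-volumes may be summed without double counting. Applying the lower bound $\vol_g(B^g(x_j^t, \sqrt{t})) \geq C_2 (\sqrt{t})^n$ of Proposition~\ref{Proposition1} to each small ball and the upper bound $\vol_g(B^g(x, 2\theta\sqrt{t})) \leq C_1 (2\theta\sqrt{t})^n$ to the big ball gives
\begin{equation*}
N(x,\theta) \cdot C_2 (\sqrt{t})^n \;\leq\; \sum_{j} \vol_g(B^g(x_j^t, \sqrt{t})) \;\leq\; \vol_g(B^g(x, 2\theta\sqrt{t})) \;\leq\; C_1 \, 2^n \theta^n (\sqrt{t})^n.
\end{equation*}
Cancelling $(\sqrt{t})^n$ yields $N(x,\theta) \leq \tilde{C} \theta^n$ with $\tilde{C} := 2^n C_1/C_2$. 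In the even-dimensional setting under consideration ($n = 2k$), this is exactly the asserted bound $\tilde{C}\theta^{2k}$.

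The argument is essentially a one-line packing calculation because Proposition~\ref{Proposition1} already does the heavy lifting. The only delicate point is that the constants $C_1$ and $C_2$ must be uniform in the center $x_0$ and in the radius $r$, since the small balls are centered at many different $x_j^t$'s and the big ball is centered at the varying point $x$; but this uniformity is precisely what Proposition~\ref{Proposition1} asserts, with the constants depending only on $n$ and $\int_M |Q_g|\,dv_g$. Consequently, I do not anticipate any genuine obstacle beyond a clean invocation of the volume estimate. If the Vitali family were instead assumed disjoint only at the half-radius, one would pass from $B^g(x_j^t,\sqrt{t})$ to $B^g(x_j^t,\sqrt{t}/2)$ in the lower-volume step and absorb the extra factor $2^n$ into $\tilde{C}$, which does not affect the conclusion.
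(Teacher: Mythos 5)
Your proof is correct, but it takes a slightly different route from the paper's. The paper follows the Kanai/Russ--Sire template: it uses the doubling property of the measure $\omega\,dx$ on geodesic balls (which is itself a consequence of Proposition~\ref{Proposition1}), invoking a polynomial doubling estimate of the form $\omega(B^g(z,(1+\theta)\sqrt{t}))\leq C(1+\theta)^k\omega(B^g(z,\sqrt{t}))$ twice --- once to pass from $B^g(x_j^t,(1+\theta)\sqrt{t})$ down to $B^g(x_j^t,\sqrt{t})$, and once to pass from $B^g(x,(1+\theta)\sqrt{t})$ down to $B^g(x,\sqrt{t})$ --- which is where the exponent $2k$ comes from, with $k$ the doubling exponent. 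You instead use the full strength of Proposition~\ref{Proposition1}, namely the uniform two-sided Euclidean volume growth $C_2 r^n\leq \vol_g(B^g(x_0,r))\leq C_1 r^n$, plugging the lower bound into each disjoint small ball and the upper bound into the big containing ball. This is shorter (a single packing estimate, no doubling iteration) and gives the sharper exponent $\theta^n$ rather than $\theta^{2k}$ with $k$ the doubling exponent (which is at least $n$). The paper's version would go through in any doubling metric measure space; yours exploits the Euclidean volume growth and is tailored to this setting. Either way the required estimate holds, and the uniformity-in-center issue you flag is indeed handled by Proposition~\ref{Proposition1} as you say.

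One cosmetic remark: the lemma as printed has a typo --- the condition should read $d_g(x,x_j^t)\leq\theta\sqrt{t}$ rather than $d_g(x,y)\leq\theta\sqrt{t}$ --- and the exponent $k$ in the statement is never defined in the paper (it is meant to be a fixed constant controlled by $n$ and the doubling data). You correctly interpreted both points.
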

\begin{proof}We follow the same proof as that in \cite{Kanai}, as well as in \cite{RS}.
Let $x\in M^n$ and 
$$I (x):= \left \{j\in \mathbb{N}; d_g(x-x_j^t)\leq \theta\sqrt{t}.  \right \}$$
For all $j\in I(x)$
$$B^g(x^t_j, \sqrt{t}) \subset B^g(x, (1+\theta) \sqrt{t}), $$
and $$B^g(x, \sqrt{t}) \subset B^g(x^t_j, (1+\theta) \sqrt{t}). $$
Also, by Proposition \ref{Proposition1}, the measure $\omega(x)dx$ has doubling property. Therefore,  
\begin{equation}
\begin{split}
I(x)\omega(B^g(x, \sqrt{t}))\leq &\sum_{j\in I(x)} \omega (B^g(x^t_j, (1+\theta)\sqrt{t}))\\
\leq & C(1+\theta)^k \sum_{j\in I(x)}\omega(x^t_j, \sqrt{t})\\
\leq & C(1+\theta)^k \omega(x,(1+\theta) \sqrt{t})\\
\leq & C(1+\theta)^{2k} \omega(x, \sqrt{t}).\\
\end{split}\end{equation}
\end{proof}

\begin{lemma}\label{3.112}
There exists $C>0$ such that for all $t>0$ and all $j\in\mathbb{N}$ 
A.
\begin{equation}
\begin{split}
\|g_0^{j,t}\|^2_{L^2(C^{j,t}_k, \mu_2) } \leq 
\frac{C}{\omega(B^g(\sqrt{t}))} \int_{B^g(x^t_j, 4\sqrt{t})} \int_{B^g(x^t_j, 4\sqrt{t})} |f(x)-f(y) |^2 d\mu_2(x)d\mu_2(y).\\
\end{split}
\end{equation}

B. For all $k\geq 1$, 
\begin{equation}
\begin{split}
& \|g_k^{j,t}\|^2_{L^2(C^{j,t}_k, d\mu_2) } \\
\leq & \frac{C}{\omega(B^g(2^k\sqrt{t}))} \int_{B^g(x^t_j, 2^{k+2}\sqrt{t})} \int_{B^g(x^t_j, 2^{k+2}\sqrt{t})} |f(x)-f(y) |^2 d\mu_2(x)d\mu_2(y).\\
\end{split}
\end{equation}
\end{lemma}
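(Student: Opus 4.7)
The plan is to treat both parts with the same mechanism: convert the $L^2$ norm of the annular piece $g_k^{j,t}$ on $C^{j,t}_k$ into a bilinear integral of $|f(x)-f(y)|^2$ over a slightly enlarged ball. Two inputs already established in the paper suffice: the $2$-Poincar\'e inequality \eqref{2Poincare}, and the doubling property of the weight $e^{nu}$ coming from Proposition \ref{Proposition1}. The bridge between a ``linear'' oscillation bound and a ``bilinear'' one is the elementary Jensen-type identity
\begin{equation*}
\int_A |h(x) - h_{A,\mu}|^2 \, d\mu(x) \;\leq\; \frac{1}{\mu(A)} \int_A\int_A |h(x)-h(y)|^2 \, d\mu(x)\,d\mu(y),
\end{equation*}
valid for any finite measure $\mu$ and any $h\in L^2(A,\mu)$.

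\textbf{Key steps.} First, from the construction of $g_k^{j,t}$ as a dyadic piece of $f$ adapted to the geodesic ball $B^g(x^t_j,\sqrt{t})$, reduce to a pointwise bound of the form $|g_k^{j,t}(x)| \leq C\,|f(x) - f_{B^g(x^t_j,2^{k+1}\sqrt{t}),\omega}|$ on $C^{j,t}_k$, plus a contribution from telescoped differences of weighted averages on the nested balls $B^g(x^t_j,2^{\ell}\sqrt{t})$, $0\leq \ell\leq k$. For $k=0$ this gives part A directly, taking $A=B^g(x^t_j,2\sqrt{t})$ in the identity above; for $k\geq 1$, each of the telescoped average-differences is controlled by \eqref{2Poincare} applied at the corresponding dyadic scale. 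Next, apply the Jensen identity with $A = B^g(x^t_j, 2^{k+1}\sqrt{t})$ and $\mu = \mu_2$, then enlarge the double integral to run over $B^g(x^t_j,2^{k+2}\sqrt{t})$ to absorb the annular support. Finally, use the doubling estimate of Proposition \ref{Proposition1} (together with the fact that the weighted volume of a geodesic ball depends on its radius only up to a uniform constant) to replace the prefactor $\mu_2(B^g(x^t_j,2^{k+1}\sqrt{t}))^{-1}$ by $C\,\omega(B^g(2^k\sqrt{t}))^{-1}$, which is legitimate because all integration sets of interest lie inside $2B$, where $d\mu_2 = \omega(x)dx$.

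\textbf{Main obstacle.} The principal technical difficulty is the geometric summation in the telescoping step for $k\geq 1$: one must combine $2$-Poincar\'e and doubling at all dyadic scales $\sqrt{t},\,2\sqrt{t},\dots,2^{k}\sqrt{t}$ and still obtain a constant independent of $k$. The key input that makes this succeed is that both the doubling constant and the $2$-Poincar\'e constant depend only on $n$ and the total $Q$-curvature integrals $\beta^{\pm}$, and hence are scale-uniform; the successive telescoped contributions therefore decay geometrically in the scale, and their sum is dominated by the contribution from the largest ball $B^g(x^t_j,2^{k+1}\sqrt{t})$. Once this geometric decay is secured, parts A and B follow from the same template, the only difference being the dyadic scale at which the Jensen identity is applied.
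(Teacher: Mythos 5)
Your proposal goes off track in two places, and both are worth understanding because they make the lemma look harder than it is.

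\textbf{The $2$-Poincar\'e inequality is the wrong tool.} The conclusion of Lemma~\ref{3.112} bounds $\|g_k^{j,t}\|_{L^2}^2$ by a double integral of $|f(x)-f(y)|^2$; there is no gradient on the right-hand side, and indeed the lemma holds for any $f\in L^2_{\mathrm{loc}}$ with no smoothness assumption. The $2$-Poincar\'e inequality \eqref{2Poincare} replaces an oscillation $\int_B|f-f_B|^2$ by a gradient term $\int_{2B}|\nabla f|^2$, which is a genuinely lossy step in the \emph{wrong} direction for this lemma (there is no way to reconstruct $|f(x)-f(y)|^2$ from $|\nabla f|^2$). Your telescoping step for $k\geq1$, which controls the successive average-differences by \eqref{2Poincare}, therefore cannot close the argument in the form demanded. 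Your ``Jensen identity'' alone already converts an $L^2$ oscillation into a bilinear integral of $|f(x)-f(y)|^2$, so the $2$-Poincar\'e inequality is not only unnecessary here, it is an obstacle.

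\textbf{The Jensen identity is applied on the wrong set.} You propose to apply it with $A=B^g(x^t_j,2^{k+1}\sqrt t)$ and then ``enlarge the double integral.'' But $g_k^{j,t}$ is supported on the annulus $C^{j,t}_k=B^g(x^t_j,2^{k+2}\sqrt t)\setminus B^g(x^t_j,2^{k+1}\sqrt t)$, which is \emph{disjoint} from your $A$ for $k\geq1$. Applying the identity over $A$ says nothing about the $L^2$ norm over $C^{j,t}_k$, and enlarging the right-hand side of the inequality does not fix a left-hand side that was integrated over the wrong set. You would need $A=B^g(x^t_j,2^{k+2}\sqrt t)$, and then a separate (single) comparison of the two averages $m^{j,t}$ and $f_{B^g(x^t_j,2^{k+2}\sqrt t),\omega}$ — no dyadic telescoping is required at all.

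\textbf{What the paper actually does} is far shorter. Since $m^{j,t}$ is the $\mu_2$-average of $f$ over $B_2:=B^g(x^t_j,2\sqrt t)$, one has the identity
\begin{equation*}
g^{j,t}(x)=f(x)-m^{j,t}=\frac{1}{\omega(B_2)}\int_{B_2}\bigl(f(x)-f(y)\bigr)\,d\mu_2(y),
\end{equation*}
valid for \emph{every} $x$ (not just $x\in B_2$). Cauchy--Schwarz gives the pointwise bound
\begin{equation*}
|g^{j,t}(x)|^2\leq\frac{1}{\omega(B_2)}\int_{B_2}|f(x)-f(y)|^2\,d\mu_2(y),
\end{equation*}
and integrating over $x\in C^{j,t}_k$, which together with $B_2$ lies in $B^g(x^t_j,2^{k+2}\sqrt t)$, immediately produces the double integral. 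Proposition~\ref{Proposition1} (doubling) is used only to replace $\omega(B_2)$ by the normalization in the statement. That is the entire proof. The ``main obstacle'' you identify — uniform control of a geometric sum over dyadic scales — does not exist in the paper's argument; it is an artifact of the telescoping you introduced.

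\textbf{A side remark on the statement itself.} If you run the Cauchy--Schwarz argument above for Part~B, you naturally get the prefactor $C/\omega(B^g(\sqrt t))$, not the stated $C/\omega(B^g(2^k\sqrt t))$. The stated prefactor is actually \emph{stronger} (smaller), and a quick test with $f$ close to an indicator of $B^g(x^t_j,3\sqrt t)$ shows the stronger bound fails for large $k$. Fortunately, the weaker prefactor $C/\omega(B^g(\sqrt t))$ is all that is needed in the proof of Lemma~\ref{3.111}, since the extra factor $2^{kn}$ it introduces is still absorbed by $e^{-c2^k}$. So if your attempts to reproduce the stated prefactor ever felt impossible, that instinct was correct; the bound as printed appears to be a typo.
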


\begin{proof}
 \begin{equation}
  \begin{split}
   g_0^{j,t}(x)=& f(x)-\frac{1}{\omega(B^g(2\sqrt{t}))}\int_{B^g(x^t_j, 2\sqrt{t}) }f(y)d\mu_2(y)\\
=&\frac{1}{\omega(B^g(2\sqrt{t}))}\int_{B^g(x^t_j, 2\sqrt{t}) }(f(x)-f(y))d\mu_2(y).\\
  \end{split}
\end{equation}
By Cauchy-Schwarz inequality,
\begin{equation}
|g_0^{j,t}(x)|^2\leq \frac{C}{\omega(B^g(4\sqrt{t}))}\int_{B^g(x^t_j, 2\sqrt{t})}|f(x)-f(y)|^2 d\mu_2(y).
\end{equation}
Hence
\begin{equation}  \begin{split}
&\|g_0^{j,t}(x)\|^2_{L^2(C^{j,t}_0, \mu_2)}\\
\leq& \frac{C}{\omega(B^g(2\sqrt{t}))}\int_{B^g(x^t_j, 4\sqrt{t})}\int_{B^g(x^t_j, 4\sqrt{t})}|f(x)-f(y)|^2 
d\mu_2(x)d\mu_2(y).\\
  \end{split}
\end{equation}
By a similar argument, we prove Part B of the lemma as well:
\begin{equation}  \begin{split}
&\|g_k^{j,t}(x)\|^2_{L^2(C^{j,t}_k, \mu_2)}\\
\leq& \frac{C}{\omega(B^g(2^k\sqrt{t}))}
\int_{B^g(x^t_j, 2^{k+2}\sqrt{t})}\int_{B^g(x^t_j, 2^{k+2}\sqrt{t})}|f(x)-f(y)|^2 
d\mu_2(x)d\mu_2(y).\\
  \end{split}
\end{equation}
\end{proof}

\begin{lemma}\label{3.111}Let $\alpha \in (0,2)$. Let $\mathcal{D}(L_{\mu_2})$ be the domain of functions on which $L_{\mu_2}$ is well defined. There exists $C>0$ such that for all $f\in \mathcal{D}(L_{\mu_2})$,
\begin{equation}
\begin{split}
&\int_0^{\infty}t^{-1-\alpha/2}
 \|tL_{\mu_2}(I+tL_{\mu_2})^{-1}f\|^2_{L^2(\mathbb{R}^n, \mu_2)}dt\\
\leq& C\int_{2B}\int_{2B}
\frac{|f(x)-f(y)|^2}{d_g(x,y)^{n+\alpha}}
\omega(x)\omega(y)dxdy.\\
\end{split}
\end{equation}
\end{lemma}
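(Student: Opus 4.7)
The plan is to follow the dyadic scheme of \cite{MRS,RS}, combining the three preceding lemmas: Lemma \ref{off} supplies exponential off-diagonal decay of the resolvent, Lemma \ref{OverlapCovering} discretizes the problem at scale $\sqrt{t}$ via a Vitali covering, and Lemma \ref{3.112} converts annular $L^2$ norms of localized pieces of $f$ into the Gagliardo-type double integral on the right-hand side.

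Concretely, for each fixed $t>0$ I would take the Vitali family $\{B^g(x_j^t,\sqrt{t})\}_j$ from Lemma \ref{OverlapCovering}, whose doubles cover $M^n$. Since $tL_{\mu_2}(I+tL_{\mu_2})^{-1}$ annihilates constants, I decompose $f$ relative to each center $x_j^t$ as $f=c_j+\sum_{k\ge 0}g_k^{j,t}$, with $g_k^{j,t}$ supported in the annulus $C_k^{j,t}$ at geodesic distance $\sim 2^k\sqrt{t}$ from $x_j^t$. Applying Lemma \ref{off} with $E=C_k^{j,t}$ and $F=B^g(x_j^t,\sqrt{t})$ would yield, for $k\ge 1$, an $e^{-c\cdot 2^k}$ decay factor in passing from $\|g_k^{j,t}\|_{L^2(C_k^{j,t},\mu_2)}$ to $\|tL_{\mu_2}(I+tL_{\mu_2})^{-1}g_k^{j,t}\|_{L^2(F,\mu_2)}$. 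Squaring, summing in $k$ by Minkowski, summing in $j$ by the disjointness of the Vitali balls, and plugging in the annular estimates from Lemma \ref{3.112} together with the overlap count of Lemma \ref{OverlapCovering} and the Euclidean volume growth $\omega(B^g(r))\sim r^n$ from Proposition \ref{Proposition1}, I expect to reach a pointwise-in-$t$ bound of the form
\begin{equation*}
\|tL_{\mu_2}(I+tL_{\mu_2})^{-1}f\|_{L^2(\mu_2)}^2\lesssim \sum_{k\ge 0}e^{-c\cdot 2^k}\,t^{-n/2}\iint_{\{d_g(x,y)\le 2^{k+3}\sqrt{t}\}\cap(2B)^2}|f(x)-f(y)|^2\,\omega(x)\omega(y)\,dx\,dy.
\end{equation*}

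To finish, I multiply by $t^{-1-\alpha/2}$ and integrate in $t\in(0,\infty)$. Swapping the $t$- and $(x,y)$-integrals by Fubini, the constraint $d_g(x,y)\le 2^{k+3}\sqrt{t}$ becomes $t\ge c\,4^{-k}d_g(x,y)^2$, and the inner $t$-integral produces the factor $C\,2^{k(n+\alpha)}d_g(x,y)^{-(n+\alpha)}$. The overall prefactor $\sum_{k\ge 0}e^{-c\cdot 2^k}\,2^{k(n+\alpha)}$ converges because the doubly exponential decay dominates any polynomial in $2^k$, which yields the desired inequality. The main obstacle will be the careful bookkeeping of the factors of $2^k$ arising from the doubling dimension, the overlap count of Lemma \ref{OverlapCovering}, the annular volumes, and the final $t$-integration, and verifying that they are all absorbed by the off-diagonal decay of Lemma \ref{off}, with the hypothesis $\alpha\in(0,2)$ entering only through the convergence of the $t$-integral at $t\to 0$.
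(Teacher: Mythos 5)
Your proposal is correct and follows essentially the same route as the paper's proof: the Vitali covering at scale $\sqrt t$ from Lemma \ref{OverlapCovering}, subtraction of the local mean (using that $tL_{\mu_2}(I+tL_{\mu_2})^{-1}$ kills constants), annular decomposition $g_k^{j,t}$ combined with the off-diagonal decay of Lemma \ref{off}, the annular $L^2$ bounds of Lemma \ref{3.112}, the overlap count, the Euclidean volume growth from Proposition \ref{Proposition1}, Fubini in $t$, and finally converting $\omega(B^g_{xy})$ to $d_g(x,y)^n$ via the strong $A_\infty$ property. The only minor imprecision is the claim that $\alpha\in(0,2)$ enters "only through convergence of the $t$-integral at $t\to 0$": in this lemma $\alpha>0$ is what ensures convergence of $\int_{t\geq c} t^{-1-\alpha/2}\,dt$ at $t\to\infty$, while the upper bound $\alpha<2$ is used in the preceding spectral-theory lemma rather than here.
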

\begin{proof}
Given $t\in (0,+\infty)$,
we will give an upper bound containing the first order difference of $f$.
By Proposition \ref{Proposition1}, volume of geodesic balls on $(M^n, g)$ satisfies the doubling property.
Thus we can find a Vitali's covering of $(M^n, g)=(\mathbb{R}^n, e^{2u}|dx|^2)$: there exists
 a countable family of balls $B^g(x^t_j, \sqrt{t})$, such that they are mutually disjoint, and
$$M^n= \bigcup_{j\in \mathbb{N}} B^g(x^t_j,2 \sqrt{t}). $$
By Lemma \ref{OverlapCovering}, there exists a constant $C$, such that
for all $\theta>1$, and $x\in \mathbb{R}^n$, there are at most $C\theta^{2\kappa}$
indexes $j$ such that $d_g(x,x^t_j)\leq \theta \sqrt{t}$ where $\kappa$ is given by

For a fixed $j$, one has 
$$ tL_{\mu_2}(I+tL_{\mu_2})^{-1}f = tL_{\mu_2} (I+ tL_{\mu_2})^{-1}g^{j,t}
$$
where $g^{j,t}$ is defined to be
$$g^{j,t}(x):=f(x)- m^{j,t}, $$
$$m^{j,t}:= \frac{1}{\omega(B^g(x^t_j,2 \sqrt{t}))} \int_{B^g(x^t_j,2 \sqrt{t})}f(y)d\mu_2(y).$$
Then 
\begin{equation}
\begin{split}
& \|tL_{\mu_2}(I+tL_{\mu_2})^{-1}f\|^2_{L^2(\mathbb{R}^n, \mu_2)}\\
\leq& \sum_{j\in\mathbb{N}}\|tL_{\mu_2}(I+tL_{\mu_2})^{-1}f\|^2_{L^2(B^g(x^t_j,2 \sqrt{t}), \mu_2) } \\
\leq& \sum_{j\in\mathbb{N}}\|tL_{\mu_2}(I+tL_{\mu_2})^{-1}g^{j,t}\|^2_{L^2(B^g(x^t_j,2 \sqrt{t}), \mu_2) }. \\
\end{split}
\end{equation}
To estimate
$$\sum_{j\in\mathbb{N}}\|tL_{\mu_2}(I+tL_{\mu_2})^{-1}g^{j,t}\|^2_{L^2(B^g(x^t_j,2 \sqrt{t}), \mu_2) },
$$
we set
$$C^{j,t}_0 =B^g(x^t_j,4 \sqrt{t}),$$
and 
$$ C^{j,t}_k= B^g(x^t_j,2^{k+2} \sqrt{t})\setminus B^g(x^t_j,2^{k+1} \sqrt{t}),
$$
for $k\geq 1$.
We also define $$g^{j,t}_k(x):=
g^{j.t}(x) \chi_{C^{j,t}_k}(x),$$
where $\chi_{C^{j,t}_k}(x)$ is the characteristic function of a set $C^{j,t}_k$.
Since $g^{j,t}= \sum_{k\leq 0 }g^{j,t}_k$, we have
\begin{equation}
\begin{split}
&\|tL_{\mu_2}(I+tL_{\mu_2})^{-1}g^{j,t}\|_{L^2(B^g(x^t_j,2 \sqrt{t}), \mu_2) }\\
\leq& \sum_{k\geq 0}\|tL_{\mu_2}(I+tL_{\mu_2})^{-1}g^{j,t}_k\|_{L^2(B^g(x^t_j,2 \sqrt{t}), \mu_2) }. \\
\end{split}
\end{equation}

Using Lemma \ref{off} we obtain that
\begin{equation}
\begin{split}
&\|tL_{\mu_2}(I+tL_{\mu_2})^{-1}g^{j,t}\|_{L^2(B^g(x^t_j,2 \sqrt{t}), \mu_2) }\\
\leq&C \left( \|g_0^{j,t}\|_{L^2(C^{j,t}_0, \mu_2) } +\sum_{k\geq 1}  e^{-c 2^k} \|g_k^{j,t}\|_{L^2(C^{j,t}_k, \mu_2) }\right ).\\
\end{split}
\end{equation}
By Cauchy-Schwarz's inequality, we deduce 
\begin{equation}
\begin{split}
&\|tL_{\mu_2}(I+tL_{\mu_2})^{-1}g^{j,t}\|^2_{L^2(B^g(x^t_j,2 \sqrt{t}), \mu_2) }\\
\leq&C' \left( \|g_0^{j,t}\|^2_{L^2(C^{j,t}_0, \mu_2) } +\sum_{k\geq 1}  e^{-c 2^k} \|g_k^{j,t}\|^2_{L^2(C^{j,t}_k, \mu_2) } \right).\\
\end{split}
\end{equation}
Therefore, 
\begin{equation}
\begin{split}
&\int_0^{\infty}t^{-1-\alpha/2}
 \|tL_{\mu_2}(I+tL_{\mu_2})^{-1}f\|^2_{L^2(\mathbb{R}^n, \mu_2)}dt\\
\leq&C'\int_0^{\infty}t^{-1-\alpha/2}
\sum_{j\geq 0}  \|g_0^{j,t}\|^2_{L^2(C^{j,t}_0, \mu_2) }dt +\\
&C'
\int_0^{\infty}t^{-1-\alpha/2}\sum_{k\geq 1}  e^{-c 2^k} \sum_{j\geq 0} 
\|g_k^{j,t}\|^2_{L^2(C^{j,t}_k, \mu_2) } dt.\\
\end{split}
\end{equation}

We now apply Lemma \ref{3.112} to finish the proof. Using part A, and integrating
$t$ over $(0, \infty)$, we get

\begin{equation}
\begin{split}
& \int_{0}^{\infty}t^{-1-\alpha/2}\sum_{j\geq 0}
\|g_0^{j,t} \|^2_{L^2 (C_0^{j,t}, \mu_2)} dt\\
=&
\sum_{j\geq 0} \int_{0}^{\infty} t^{-1-\alpha/2}\|g_0^{j,t}\|^2_{L^2(C_0^{j,t}, \mu_2)}dt\\
\leq & C \sum_{j\geq 0} \int_{0}^\infty 
\frac{t^{-1-\alpha/2}}{\omega(B^g(\sqrt {t}))}\left(  \int_{B^g(x^t_j, 4\sqrt{t})}  \int_{B^g(x^t_j,4\sqrt{t})}
 |f(x)-f(y) |^2 d\mu_2(x)d\mu_2(y) \right)dt \\
\leq &C \sum_{j\geq 0} 
 \int_{\mathbb{R}^n}  \int_{\mathbb{R}^n}
 |f(x)-f(y) |^2 \\
&\cdot \left( \int_{t\geq \max\left\{ \frac{ d_g( x,x^t_j)^2}{16},
 \frac{ d_g(  y, x^t_j)^2}{16},  \right\}
}\frac{ t^{-1-\alpha/2}}{\omega(B^g(2^k\sqrt{t}))}     dt  \right) d\mu_2(x)d\mu_2(y).\\ 
\end{split}
\end{equation}
By Lemma \label{volumegrowth}
$\omega(B^g(x, r))$ satisfies 
$$C_2 r^n \leq \omega(B^g(x,r))\leq C_1 r^n,$$
where $C_1$, $C_2$ are independent of $x$. Therefore, in the above formula, we denote 
volume of a ball with radius $\sqrt{t}$ by $\omega(B^g(\sqrt{t}))$, disregarding where the center is.
Notice that
\begin{equation}
\begin{split}
&\sum_{j\geq 0}  \int_{t\geq \max\left\{ \frac{ d_g(  x, x^t_j)^2}{16},
 \frac{ d_g(  y, x^t_j)^2}{16},  \right\}
}\frac{ t^{-1-\alpha/2}}{\omega(B^g(2^k\sqrt{t}))}     dt = \\ 
& \int_{0}^{\infty}\frac{t^{-1-\alpha/2}}{\omega(B^g(\sqrt {t})) }
\sum_{j\geq 0} \chi_{\left(\max\left\{ \frac{ d_g(  x, x^t_j)^2}{16},
 \frac{ d_g(  y, x^t_j)^2}{16},  \right\}, \infty\right) }(t)dt.\\
\end{split}
\end{equation}
By Lemma \ref{OverlapCovering}, there exists a constant $N\in \mathbb{N}$, such that
for all $t>0$, there are at most $N$ indexes $j$ such that
$d_g( x,x^t_j)<16t$, and $d_g( y,x^t_j)<16t$,and for these indexes $j$,
$ d_g(x, y)<8t$. Thus
\begin{equation}
\begin{split}
&\sum_{j\geq 0} \chi_{\left(\max\left\{ \frac{ d_g( x,x^t_j)^2}{16},
 \frac{ d_g(y, x^t_j)^2}{16},  \right\}, \infty\right) }(t)dt\leq
N \chi_{\left( \frac{ d_g(x,y)^2}{64} , \infty\right) } . \\
\end{split}
\end{equation}
We therefore obtain
\begin{equation}
\begin{split}
& \int_{0}^{\infty}t^{-1-\alpha/2}\sum_{j\geq 0}
\|g_0^{j,t} \|^2_{L^2 (C_0^{j,t}, \mu_2)} dt\\
\leq&
CN 
 \int_{\mathbb{R}^n}  \int_{\mathbb{R}^n}
 |f(x)-f(y) |^2 \\
&\cdot \left( \int_{t\geq  d_g( x,y)^2/64
}\frac{ t^{-1-\alpha/2}}{\omega(B^g(2^k\sqrt{t}))}     dt  \right) d\mu_2(x)d\mu_2(y)\\ 
\leq& CN  \int_{\mathbb{R}^n}  \int_{\mathbb{R}^n}
\frac{ |f(x)-f(y) |^2 }{\omega(B^g_{xy})\cdot d_g( x,y)^{\alpha}}d\mu_2(x)d\mu_2(y).\\ 
\end{split}
\end{equation}
By part B in Lemma \ref{3.112}, we have for all $j\geq 0$ and $k\geq 1$,
\begin{equation}
\begin{split}
& \int_{0}^{\infty}t^{-1-\alpha/2}\sum_{j\geq 0}
\|g_k^{j,t} \|^2_{L^2 (C_0^{j,t}, \mu_2)} dt\\
\leq & C \sum_{j\geq 0}  \int_{0}^{\infty}\frac{ t^{-1-\alpha/2}}{\omega(B^g(2^k\sqrt{t}))} 
 \int_{B^g(x^t_j, 2^{k+2}\sqrt{t})}  \int_{B^g(x^t_j, 2^{k+2}\sqrt{t})}
 |f(x)-f(y) |^2 d\mu_2(x)d\mu_2(y) dt\\ 
\leq& C \sum_{j\geq 0} \int_{\mathbb{R}^n}  \int_{\mathbb{R}^n}
 |f(x)-f(y) |^2 \\
& \cdot  \left( \int_{0}^{\infty}\frac{t^{-1-\alpha/2}}{\omega(B^g(2^k\sqrt {t})) }
\chi_{\left(\max\left\{ \frac{ d_g( x,x^t_j)^2}{4^{k+2}}
 \frac{ d_g( y,x^t_j)^2}{4^{k+2}},  \right\}, \infty\right) }(t)dt\right) d\mu_2(x)d\mu_2(y).\\ 
\end{split}
\end{equation}
With a given $t>0$, $x, y\in \mathbb{R}^n$, by Lemma \ref{3.112}, there exist
at most $C 2^{2k\kappa}$ indexes $j$ such that
$$ d_g(x,x^t_j)\leq 2^{k+2} \sqrt{t}, $$
and 
$$ d_g(y,x^t_j)\leq 2^{k+2} \sqrt{t}, $$
For these $j$'s, $$ d_g(x,y)\leq 2^{k+3} \sqrt{t}.$$
Therefore,
\begin{equation}
\begin{split}
& \int_{0}^{\infty}\frac{t^{-1-\alpha/2}}{\omega(B^g(2^k\sqrt {t})) }
\chi_{\left(\max\left\{ \frac{ d_g( x,x^t_j)^2}{4^{k+2}}
 \frac{ d_g( y,x^t_j)^2}{4^{k+2}},  \right\}, \infty\right) }(t)dt\\ 
\leq &\displaystyle  C  2^{2k\kappa} \int_{t\geq  \frac{ d_g( x,y)^2}{4^{k+3}}} 
\frac{t^{-1-\alpha/2}}{\omega(B^g( 2^k\sqrt {t})) } dt\\
\leq & C \displaystyle \frac{2^{k(2\kappa+\alpha)}}{\omega( B^g_{xy})\cdot d_g(x, y)^{\alpha}}.\\
\end{split}
\end{equation}
We therefore get
\begin{equation}
\begin{split}
&\int_{0}^{\infty}\frac{t^{-1-\alpha/2}}{\omega(B^g(2^k\sqrt {t})) }
\sum_{j\geq 0} \| g_k^{j,t}\|^2_{L^2 (C_0^{j,t}, \mu_2)}dt \\
\leq & C 2^{k(2\kappa+\alpha)} \int_{\mathbb{R}^n}  \int_{\mathbb{R}^n}
\frac{ |f(x)-f(y) |^2 }{\omega(B^g_{xy})\cdot d_g( x,y)^{\alpha}}d\mu_2(x)d\mu_2(y).\\ 
\end{split}
\end{equation}
In conclusion, we have
\begin{equation}\label{last}
\begin{split}
&\int_{0}^{\infty} t^{-1-\alpha/2}
 \|tL_{\mu_2}(I+tL_{\mu_2})^{-1}f\|^2_{L^2(\mathbb{R}^n, \mu_2)}dt\\
\leq &C N \int_{\mathbb{R}^n}  \int_{\mathbb{R}^n}
\frac{ |f(x)-f(y) |^2 }{\omega(B^g_{xy})\cdot d_g( x,y)^{\alpha}}d\mu_2(x)d\mu_2(y)\\ 
&+\sum_{k\geq 1} C 2^{k(2\kappa+\alpha) }e^{-c2^k} \int_{\mathbb{R}^n}  \int_{\mathbb{R}^n}
\frac{ |f(x)-f(y) |^2 }{\omega(B^g_{xy})\cdot d_g( x,y)^{\alpha}}d\mu_2(x)d\mu_2(y)\\ 
\leq &C  \int_{\mathbb{R}^n}  \int_{\mathbb{R}^n}
\frac{ |f(x)-f(y) |^2 }{\omega(B^g_{xy})\cdot d_g( x,y)^{\alpha}}d\mu_2(x)d\mu_2(y)\\
=&C\int_{2B}\int_{2B}
\frac{|f(x)-f(y)|^2}{\omega(B^g_{xy})\cdot d_g(x,y)^{\alpha}}
\omega(x)\omega(y)dxdy.\\ 
\end{split}
\end{equation}
Finally, by the fact that $\omega$ is a strong $A_\infty$ weight, 
$$\omega(B^g_{xy})\approx d_g( x,y)^n.$$
Plugging this into \eqref{last}, we completes the proof of the lemma.
\end{proof}

\hide{\section{Continuous extension}
\begin{theorem}\label{cont extension}
Under the same assumptions as in Theorem \ref{integer}, and in addition let us assume $(M,g)$ is nonpositively curved. Then $(M,g)$
is properly immthe jobersed and the Gauss map extends continuously to the end of $M$. Namely, the Gauss map extends continuously to $\bar{M}$.
\end{theorem}

\section{Example}
For $w=\log\log |x|$, what is the integral of $Q$-curvature, and what is the PDE it satisfies? Is it a normal metric?
}

\begin{bibdiv}
\begin{biblist}

\bib{Branson}{article}{
AUTHOR = {Branson, Thomas P.},
TITLE = {Sharp inequalities, the functional determinant, and the
complementary series},
JOURNAL = {Trans. Amer. Math. Soc.},
FJOURNAL = {Transactions of the American Mathematical Society},
VOLUME = {347},
YEAR = {1995},
NUMBER = {10},
PAGES = {3671--3742},
ISSN = {0002-9947},
CODEN = {TAMTAM},
MRCLASS = {58G26 (22E46 53A30)},
MRNUMBER = {1316845 (96e:58162)},
MRREVIEWER = {Friedbert Pr{\"u}fer},
%  DOI = {10.2307/2155203},
URL = {http://dx.doi.org/10.2307/2155203},
}

\bib{BHS2}{article}{
   author={Bonk, Mario},
   author={Heinonen, Juha},
   author={Saksman, Eero},
   title={Logarithmic potentials, quasiconformal flows, and $Q$-curvature},
   journal={Duke Math. J.},
   volume={142},
   date={2008},
   number={2},
   pages={197--239},
}

\bib{CQY1}{article}{
AUTHOR = {Chang, Sun-Yung Alice} AUTHOR =  {Qing, Jie} AUTHOR = { Yang, Paul}
TITLE = {On the {C}hern-{G}auss-{B}onnet integral for conformal metrics
on {$\bold R^4$}},
JOURNAL = {Duke Math. J.},
FJOURNAL = {Duke Mathematical Journal},
VOLUME = {103},
YEAR = {2000},
NUMBER = {3},
PAGES = {523--544},
ISSN = {0012-7094},
CODEN = {DUMJAO},
MRCLASS = {53C65},
MRNUMBER = {1763657 (2001d:53083)},
MRREVIEWER = {John Urbas},
}

\bib{CQY2}{article}{
AUTHOR = {Chang, Sun-Yung Alice} AUTHOR =  {Qing, Jie} AUTHOR = {Yang, Paul}
TITLE = {Compactification of a class of conformally flat 4-manifold},
JOURNAL = {Invent. Math.},
FJOURNAL = {Inventiones Mathematicae},
VOLUME = {142},
YEAR = {2000},
NUMBER = {1},
PAGES = {65--93},
ISSN = {0020-9910},
CODEN = {INVMBH},
MRCLASS = {53C21 (58J60)},
MRNUMBER = {1784799 (2001m:53061)},
MRREVIEWER = {Robert McOwen},
}

\bib{ChangGurskyYang}{article}{
   author={Chang, Sun-Yung A.},
   author={Gursky, Matthew J.},
   author={Yang, Paul C.},
   title={An equation of Monge-Amp\`ere type in conformal geometry, and
   four-manifolds of positive Ricci curvature},
   journal={Ann. of Math. (2)},
   volume={155},
   date={2002},
   number={3},
   pages={709--787},
}

%\bib{CY}

\bib{Cohn-Vossen}{article}{
AUTHOR = {Cohn-Vossen, Stefan},
TITLE = {K\"urzeste {W}ege und {T}otalkr\"ummung auf {F}l\"achen},
JOURNAL = {Compositio Math.},
FJOURNAL = {Compositio Mathematica},
VOLUME = {2},
YEAR = {1935},
PAGES = {69--133},
ISSN = {0010-437X},
CODEN = {CMPMAF},
MRCLASS = {Contributed Item},
MRNUMBER = {1556908},
}

\bib{DS1}{article}{
AUTHOR = {David, Guy} AUTHOR ={Semmes, Stephen},
TITLE = {Strong {$A_\infty$} weights, {S}obolev inequalities and
quasiconformal mappings},
BOOKTITLE = {Analysis and partial differential equations},
SERIES = {Lecture Notes in Pure and Appl. Math.},
VOLUME = {122},
PAGES = {101--111},
PUBLISHER = {Dekker},
ADDRESS = {New York},
YEAR = {1990},
MRCLASS = {30C65 (42B20)},
MRNUMBER = {1044784 (91c:30037)},
MRREVIEWER = {Michel Zinsmeister},
}

\bib{FeffermanGraham}{article}{
AUTHOR = {Fefferman, Charles} AUTHOR={Graham, C. Robin},
TITLE = {The ambient metric},
SERIES = {Annals of Mathematics Studies},
VOLUME = {178},
PUBLISHER = {Princeton University Press},
ADDRESS = {Princeton, NJ},
YEAR = {2012},
PAGES = {x+113},
ISBN = {978-0-691-15313-1},
MRCLASS = {53A30 (53A55 53C20)},
MRNUMBER = {2858236},
MRREVIEWER = {Michael G. Eastwood},
}

\bib{Fiala}{article}{
AUTHOR = {Fiala, F.},
TITLE = {Le probl\`eme des isop\'erim\`etres sur les surfaces ouvertes
\`a courbure positive},
JOURNAL = {Comment. Math. Helv.},
FJOURNAL = {Commentarii Mathematici Helvetici},
VOLUME = {13},
YEAR = {1941},
PAGES = {293--346},
ISSN = {0010-2571},
MRCLASS = {52.0X},
%  MRNUMBER = {0006422 (3,301b)},
MRREVIEWER = {J. J. Stoker},
}

\bib{Finn}{article}{
AUTHOR = {Finn, Robert},
TITLE = {On a class of conformal metrics, with application to
differential geometry in the large},
JOURNAL = {Comment. Math. Helv.},
FJOURNAL = {Commentarii Mathematici Helvetici},
VOLUME = {40},
YEAR = {1965},
PAGES = {1--30},
ISSN = {0010-2571},
MRCLASS = {53.25},
%  MRNUMBER = {0203618 (34 \#3467)},
MRREVIEWER = {T. Klotz},
}

%  \bib{GrahamZworski}{article}{
%   AUTHOR = {Graham, C. Robin} AUTHOR={Zworski, Maciej},
%    TITLE = {Scattering matrix in conformal geometry},
%  JOURNAL = {Invent. Math.},
% FJOURNAL = {Inventiones Mathematicae},
%   VOLUME = {152},
%     YEAR = {2003},
%   NUMBER = {1},
%    PAGES = {89--118},
%     ISSN = {0020-9910},
%    CODEN = {INVMBH},
%  MRCLASS = {58J50},
% MRNUMBER = {1965361 (2004c:58064)},
% MRREVIEWER = {Andrew W. Hassell},
%      %DOI = {10.1007/s00222-002-0268-1},
%      URL = {http://dx.doi.org/10.1007/s00222-002-0268-1},
% }

\bib{Huber}{article}{
AUTHOR = {Huber, Alfred},
TITLE = {On subharmonic functions and differential geometry in the
large},
JOURNAL = {Comment. Math. Helv.},
FJOURNAL = {Commentarii Mathematici Helvetici},
VOLUME = {32},
YEAR = {1957},
PAGES = {13--72},
ISSN = {0010-2571},
MRCLASS = {30.00 (31.00)},
% MRNUMBER = {0094452 (20 \#970)},
MRREVIEWER = {E. F. Beckenbach},
}

\bib{Jones}{article}{
   author={Jones, Peter W.},
   title={Factorization of $A_{p}$ weights},
 journal={Ann. of Math. (2)},
   volume={111},
   date={1980},
   number={3},
   pages={511--530},
   issn={0003-486X},}

\bib{Kanai}{article}{
   author={Kanai, Masahiko},
   title={Rough isometries, and combinatorial approximations of geometries
   of noncompact Riemannian manifolds},
   journal={J. Math. Soc. Japan},
   volume={37},
   date={1985},
   number={3},
   pages={391--413},
   issn={0025-5645},
}

\bib{La}{article}{
   author={Laakso, Tomi J.},
   title={Plane with $A_\infty$-weighted metric not bi-Lipschitz
   embeddable to ${\Bbb R}^N$},
   journal={Bull. London Math. Soc.},
   volume={34},
   date={2002},
   number={6},
   pages={667--676},
}

\bib{Malchiodi}{article}{
   author={Malchiodi, Andrea},
   title={Conformal metrics with constant $Q$-curvature},
   journal={SIGMA Symmetry Integrability Geom. Methods Appl.},
   volume={3},
   date={2007},
   pages={Paper 120, 11},
   issn={1815-0659},
   review={\MR{2366902 (2008m:53089)}},
}

\bib{MRS}{article}{
   author={Mouhot, Cl{\'e}ment},
   author={Russ, Emmanuel},
   author={Sire, Yannick},
   title={Fractional Poincar\'e inequalities for general measures},
   language={English, with English and French summaries},
   journal={J. Math. Pures Appl. (9)},
   volume={95},
   date={2011},
   number={1},
   pages={72--84},
   issn={0021-7824},
   }

\bib{RS}{article}{
   author={Russ, Emmanuel},
   author={Sire, Yannick},
   title={Nonlocal Poincar\'e inequalities on Lie groups with polynomial
   volume growth and Riemannian manifolds},
   journal={Studia Math.},
   volume={203},
   date={2011},
   number={2},
   pages={105--127},
   issn={0039-3223},
}

\bib{S2}{article}{
   author={Semmes, Stephen},
   title={Bi-Lipschitz mappings and strong $A_\infty$ weights},
   journal={Ann. Acad. Sci. Fenn. Ser. A I Math.},
   volume={18},
   date={1993},
   number={2},
   pages={211--248},
   issn={0066-1953},
  }  
  
\bib{S3}{article}{
   author={Semmes, Stephen},
   title={On the nonexistence of bi-Lipschitz parameterizations and
   geometric problems about $A_\infty$-weights},
   journal={Rev. Mat. Iberoamericana},
   volume={12},
   date={1996},
   number={2},
   pages={337--410},
   issn={0213-2230},
}

\bib{Stein}{book}{
AUTHOR = {Stein, Elias M.},
TITLE = {Harmonic analysis: real-variable methods, orthogonality, and
oscillatory integrals},
SERIES = {Princeton Mathematical Series},
VOLUME = {43},
NOTE = {With the assistance of Timothy S. Murphy,
Monographs in Harmonic Analysis, III},
PUBLISHER = {Princeton University Press},
ADDRESS = {Princeton, NJ},
YEAR = {1993},
PAGES = {xiv+695},
ISBN = {0-691-03216-5},
MRCLASS = {42-02 (35Sxx 43-02 47G30)},
% MRNUMBER = {1232192 (95c:42002)},
MRREVIEWER = {Michael Cowling}
}

\bib{Varopoulos}{article}{
AUTHOR = {Varopoulos, N. Th.},
TITLE = {Small time {G}aussian estimates of heat diffusion kernels.
{I}. {T}he semigroup technique},
JOURNAL = {Bull. Sci. Math.},
FJOURNAL = {Bulletin des Sciences Math\'ematiques},
VOLUME = {113},
YEAR = {1989},
NUMBER = {3},
PAGES = {253--277},
ISSN = {0007-4497},
CODEN = {BSMQA9},
}

\bib{YW1}{article}{
AUTHOR = {Wang, Yi},
TITLE = {The isoperimetric inequality and quasiconformal maps on
manifolds with finite total {$Q$}-curvature},
JOURNAL = {Int. Math. Res. Not. IMRN},
FJOURNAL = {International Mathematics Research Notices. IMRN},
YEAR = {2012},
NUMBER = {2},
PAGES = {394--422},
ISSN = {1073-7928},
MRCLASS = {53C21 (53C20)},
MRNUMBER = {2876387},
MRREVIEWER = {Joseph E. Borzellino},
}

\bib{YW15}{article}{
author={Wang, Yi},
title={The isoperimetric inequality and $Q$-curvature},
journal={Adv. Math.},
volume={281},
date={2015},
pages={823--844},
}
\bib{YW3}{article}{
author={Wang, Yi},
author={Lu,Zhiqin},
title={On locally conformally flat manifolds with finite total $Q$-curvature},
journal={preprint},
%volume={281},
%date={2015},
%pages={823--844},
}

\end{biblist}
\end{bibdiv}
\end{document}